\def\thesection{\arabic{section}}
\def\theequation{\thesection.\arabic{equation}}
\newcommand{\ds} {\displaystyle}
\newcommand{\e}{\varepsilon}
\newcommand{\al} {\alpha}
\newcommand{\de} {\delta}
\newcommand{\ga} {\gamma}
\newcommand{\Om} {\Omega}
\newcommand{\ra} {\rightarrow}
\newcommand{\De} {\Delta}
\newcommand{\la} {\lambda}
\newcommand{\La} {\Lambda}
\newcommand{\noi} {\noindent}
\newcommand{\na} {\nabla}
\newcommand{\mb} {\mathbb}
\newcommand{\mc} {\mathcal}
\newcommand{\lra} {\longrightarrow}
\newcommand{\ld} {\langle}
\newcommand{\rd} {\rangle}
\def\theequation{\@arabic{\c@section}.\@arabic{\c@equation}}
\def\QED{\hfill {$\square$}\goodbreak \medskip}
\newtheorem{Theorem}{Theorem}[section]
\newtheorem{Lemma}[Theorem]{Lemma}
\newtheorem{Proposition}[Theorem]{Proposition}
\newtheorem{Definition}[Theorem]{Definition}
\begin{document}
\vspace{0.01in}

\title
{On the Second Eigenvalue of Combination Between Local and Nonlocal $p$-Laplacian  }

\author{ {\bf Divya Goel\footnote{e-mail: divyagoel2511@gmail.com} \; 
		and \;  K. Sreenadh\footnote{
			e-mail: sreenadh@maths.iitd.ac.in}} \\ Department of Mathematics,\\ Indian Institute of Technology Delhi,\\
	Hauz Khaz, New Delhi-110016, India. }

\date{}

\maketitle

\begin{abstract}
\noi In this paper, we study Mountain Pass Characterization of the second eigenvalue of the operator $-\De_p u -\De_{J,p}u$ and  study  shape optimization problems related to these eigenvalues.
\medskip
  
\noi \textbf{Key words:} nonlocal $p$-Laplacian, Eigenvalue problem, Faber-Krahn inequality, nonlocal Hong-Krahn-Szego inequality. 

\medskip

\noi \textit{2010 Mathematics Subject Classification:   35P30, 47J10, 49Q10.} 

\end{abstract}

\section{Introduction}
Let $\Om $ be an open and  bounded domain in $\mathbb{R}^N$ with $C^{1,\alpha}$ boundary. In this article, we  study the following eigenvalue  problem
\begin{equation*}
(P_\la)\;
\left.\begin{array}{rllll}
\mathcal{L}_{J,p}(u) =\la |u|^{p-2}u \text{ in } \Om, \; u=0 \text{ in } \mathbb{R}^N\setminus \Om,
\end{array}
\right.
\end{equation*}
where the operator $\mathcal{L}_{J,p}(u)$ is defined as 
$\mathcal{L}_{J,p}u:= -\De_p u -\De_{J,p}u $, $\De_p(u):= \text{div}(|\na u|^{p-2}\na u)$ is the usual $p$-Laplacian operator and  the nonlocal $p$-Laplacian is given by 
\begin{align*}
\De_{J,p}u(x):= 2 \ds \int_{\mathbb{R}^N} |u(x)-u(y)|^{p-2}(u(x)-u(y))J(x-y)~ dy, \quad 1< p< \infty .
\end{align*}
 Here the kernel $J:\mathbb{R}^N \ra \mathbb{R}$ is a radially symmetric, nonnegative continuous function with compact support, $J(0)>0$ and $\int_{\mathbb{R}^N} J(x)~ dx =1$. 
 Recently, the study of nonlocal equations fascinate a lot of researchers. In particular, equations involving fractional $p$-Laplacian  operator gain lot of attention. In \cite{pal}, Lindgren and Lindqvist studied the eigenvalues of the following problem 
 \begin{equation}\label{fs16}
 \begin{aligned}
 -2 \int_{\mathbb{R}^N} \frac{|u(x)-u(y)|^{p-2}(u(x)-u(y))}{|x-y|^{N+sp}}~dy  = \la |u(x)|^{p-2}u(x) \text{ in } \Om, \; u=0 \text{ in } \mathbb{R}^N\setminus \Om
 \end{aligned}
 \end{equation}
 Here they studied the eigenvalues, viscosity solutions and  the limit case as $p \ra \infty$. Later in \cite{second}, Brasco and Parini studied the problem \eqref{fs16} in an open bounded, possibly disconnected set $\Om \subset \mathbb{R}^N$ and $1<p<\infty$. In this paper,  authors also discussed about the regularity of the eigenfunctions of the operator fractional  $p$-Laplacian and gave the mountain pass characterization of the second eigenvalue of fractional  $p$-Laplacian. Moreover, authors proved the nonlocal Hong-Krahn-Szego inequality. We cite \cite{bisci, goel1, hardy, hitchhker}  and references therein for the work on  equations involving  fractional  $p$-Laplacian.  For the work on second eigenvalue of $p$-Laplacian we cite \cite{cfg, sree1} and references therein. \\
  On the other hand, nonlocal equations involving nonlocal $p$-Laplacian of zero-order, that is, the following problem
  \begin{align}\label{fs17}
  - \int_{\mathbb{R}^N} |u(x)-u(y)|^{p-2}(u(x)-u(y))J(x-y)~ dy= \la |u|^{p-2}u
  \end{align} 
  has been studied in \cite{an1, an3}. In these papers it has been proved that the Rayleigh quotient corresponding to problem \eqref{fs17} is strictly positive.  We refer \cite{an1, an2, an3} and references therein for the work on  equations involving nonlocal $p$-Laplacian of zero-order.\\
 The inspiring point of our work is the work of Del Pezzo et al. (\cite{rossi1}), where
 authors studied the eigenvalue problem of the operator $\mathcal{L}_{J,p}$ and   proved the existence of the eigenfunction of the smallest eigenvalue. In particular, authors proved  the following result:
\begin{Theorem} \label{fsthm4}
 Assume $p\geq 2$.	There exists a sequence of eigenvalues $\{\la_k\}_{k \in \mathbb{N}}$ of the operator $\mathcal{L}_{J,p}$ such that $\la_k\ra +\infty$. The first eigenvalue $\la_1(\Om)$ is simple, isolated and its corresponding eigenfunctions have a constant sign. Moreover, $\la_1(\Om)$ can be characterized by 
	\begin{align*}
	\la_1(\Om):= \inf_{u \in W_0^{1,p}(\Om)}\bigg\{   \int_{\Om}|\na u|^p~dx+ \ds \int_{\mathbb{R}^N}\int_{\mathbb{R}^N} |u(x)-u(y)|^{p}J(x-y)~ dxdy :\int_{\Om}|u|^p~dx=1 \bigg\}.
	\end{align*}
	 Furthermore, every eigenfunction  belongs to $C^{1,\al}(\overline{\Om})$ for some $\al \in (0,1)$. 
	\end{Theorem}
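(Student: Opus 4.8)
The plan is to realize $\la_1(\Om)$ as the minimum of the associated energy. Extending every $u\in W_0^{1,p}(\Om)$ by $0$ outside $\Om$ and setting
\[
\mc J(u):=\int_\Om|\na u|^p\,dx+\int_{\mathbb{R}^N}\int_{\mathbb{R}^N}|u(x)-u(y)|^p\,J(x-y)\,dx\,dy ,
\]
one has, using $J\ge 0$ and $\int_{\mathbb{R}^N}J=1$, the estimate $0\le\mc J(u)-\|\na u\|_{L^p(\Om)}^p\le 2^p\|u\|_{L^p(\Om)}^p$; thus $\mc J$ is finite, convex and coercive on $W_0^{1,p}(\Om)$, hence sequentially weakly lower semicontinuous. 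Since the constraint set $\mc S:=\{u\in W_0^{1,p}(\Om):\|u\|_{L^p(\Om)}=1\}$ is weakly closed by the compact embedding $W_0^{1,p}(\Om)\hookrightarrow L^p(\Om)$, $\mc J$ attains its minimum on $\mc S$ at some $u_1$, and the Lagrange multiplier rule shows that $u_1$ solves $(P_{\la_1})$ with $\la_1=\mc J(u_1)=\min_{\mc S}\mc J$, which is the stated characterization. Since $\big||u(x)|-|u(y)|\big|\le|u(x)-u(y)|$ and $|\na|u||=|\na u|$ a.e., $\mc J(|u_1|)\le\mc J(u_1)$, so $|u_1|$ is again a minimizer, hence a nonnegative eigenfunction; the strong maximum principle for $\mc L_{J,p}$ (V\'azquez's principle for the $-\De_p$ part, positivity of the kernel $J$ for the nonlocal part) gives $|u_1|>0$ in $\Om$, so no first eigenfunction changes sign.

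For simplicity I would run the hidden-convexity (D\'iaz--Sa\'a) argument: for positive $u,v\in\mc S$ put $\sigma_t:=\big((1-t)u^p+tv^p\big)^{1/p}$; then $t\mapsto\int_\Om|\na\sigma_t|^p$ is convex (Belloni--Kawohl/Lindqvist) and $t\mapsto\int\int|\sigma_t(x)-\sigma_t(y)|^p J$ is convex by the same pointwise convexity lemma that Brasco--Parini \cite{second} use in the fractional case, with equality throughout only if $u=v$; taking $u,v$ to be two positive first eigenfunctions forces $u=v$. (Alternatively one combines Picone's identity for $-\De_p$ with its nonlocal analogue.) For the sequence of eigenvalues, $\mc J|_{\mc S}$ is an even $C^1$ functional on a $C^1$ manifold satisfying the Palais--Smale condition — a PS sequence is bounded in $W_0^{1,p}(\Om)$, converges in $L^p(\Om)$ by compactness, and then strongly in $W_0^{1,p}(\Om)$ by the $(S_+)$-property of $-\De_p$ together with the monotonicity of $-\De_{J,p}$ — so Lusternik--Schnirelmann theory via the Krasnoselskii genus $\ga$ yields the min--max values $\la_k:=\inf_{\ga(K)\ge k}\max_{u\in K}\mc J(u)$; these are eigenvalues, $\la_1$ coincides with the minimum above, and $\la_k\nearrow+\infty$ because $\mc J$ is unbounded above on $\mc S$ while a finite accumulation point of $\{\la_k\}$ would contradict compactness.

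Isolation of $\la_1$ I would prove by contradiction. If there were eigenvalues $\mu_n\downarrow\la_1$ with $\mu_n>\la_1$ and $L^p$-normalized eigenfunctions $u_n$, then $\mc J(u_n)=\mu_n$ is bounded, so along a subsequence $u_n\rp u$ in $W_0^{1,p}(\Om)$ and $u_n\to u$ in $L^p(\Om)$; $u\in\mc S$ minimizes $\mc J$, so up to sign $u>0$ in $\Om$ by the first paragraph, whence $|\{u_n<0\}|\to 0$. On the other hand, $\mu_n>\la_1$ forces $u_n$ to change sign (a signed eigenfunction would, by simplicity, be a first eigenfunction); testing $(P_{\mu_n})$ with $u_n^-$ and using the elementary inequality $|a-b|^{p-2}(a-b)(a^--b^-)\le-|a^--b^-|^p$ to bound the nonlocal cross-term from above gives $\mu_n\ge\mc J(u_n^-)/\|u_n^-\|_{L^p}^p$, which is at least the first Dirichlet eigenvalue of $-\De_p$ on $\{u_n<0\}$, and hence at least $C_N|\{u_n<0\}|^{-p/N}$ by the Faber--Krahn inequality for $-\De_p$ (available because $\mc L_{J,p}$ dominates $-\De_p$ and $W_0^{1,p}\hookrightarrow L^{p^*}$). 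Thus $|\{u_n<0\}|\ge c\,\mu_n^{-N/p}$ is bounded away from $0$, contradicting $|\{u_n<0\}|\to 0$.

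Finally, for regularity, an eigenfunction $u$ is a weak solution of $-\De_p u=g$ with $g:=\la|u|^{p-2}u+\De_{J,p}u$. Since $J$ is continuous with compact support and $\int_{\mathbb{R}^N}J=1$, one has $|\De_{J,p}u(x)|\le 2^{p-1}\big(|u(x)|^{p-1}+(|u|^{p-1}*J)(x)\big)$, so a De Giorgi/Moser iteration for $-\De_p u=g$ — handling the nonlocal term on the same footing as the zero-order term $\la|u|^{p-2}u$ — yields $u\in L^\infty(\Om)$; then $|\De_{J,p}u(x)|\le 2^p\|u\|_{L^\infty}^{p-1}$, so $g\in L^\infty(\Om)$, and the interior and boundary $C^{1,\al}$ estimates of DiBenedetto--Tolksdorf--Lieberman for $-\De_p u=g\in L^\infty$ with $u=0$ on the $C^{1,\al}$ boundary $\pa\Om$ give $u\in C^{1,\al}(\ov\Om)$. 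The step I expect to be the main obstacle is the isolation of $\la_1$: it requires controlling the nonlocal cross-term $\int\int|u_n(x)-u_n(y)|^{p-2}(u_n(x)-u_n(y))\big(u_n^-(x)-u_n^-(y)\big)J$, which does not decouple the way its local counterpart does, and first establishing that eigenfunctions for eigenvalues above $\la_1$ change sign — which loops back to the nonlocal hidden-convexity/Picone lemma, and is presumably where the hypothesis $p\ge 2$ enters, through the sharper monotonicity estimates available for the nonlocal term in that range.
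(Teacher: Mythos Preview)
The paper does not supply its own proof of this statement: Theorem~\ref{fsthm4} is quoted in the introduction as a result of Del~Pezzo, Ferreira and Rossi \cite{rossi1}, and the authors only add the remark that simplicity, isolation, and the sign-change property for eigenfunctions above $\la_1(\Om)$ extend to all $1<p<\infty$ via the discrete Picone identity as in \cite{hardy}. Your proposal is therefore not in competition with any argument in the paper; it is a correct and essentially complete outline of the standard proof, in the spirit of what \cite{rossi1} does (direct method for $\la_1$, strong maximum principle for positivity, Lusternik--Schnirelmann genus for the sequence $\la_k\to\infty$, a Faber--Krahn lower bound on the negativity set for isolation, and bootstrap to $C^{1,\al}$ via $-\De_p u=g\in L^\infty$).

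One small point of contrast: for simplicity you reach first for the hidden-convexity curve $\sigma_t=\big((1-t)u^p+tv^p\big)^{1/p}$ and mention Picone only as an alternative, whereas the paper's remark singles out the discrete Picone identity; both routes work, and in fact the paper records the $\sigma_t$-convexity for the nonlocal term as Lemma~\ref{fslem4}, used later for a different purpose. Your closing suspicion that the hypothesis $p\ge 2$ is what makes the isolation argument go through is not confirmed by the paper: the remark following Theorem~\ref{fsthm4} asserts that the Picone-based argument handles all $1<p<\infty$, so the restriction $p\ge 2$ appears to be inherited from \cite{rossi1} rather than intrinsic to the result.
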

We remark that by using the discrete picone identity as in  \cite{hardy}, one can get $\la_1(\Om)$ is  simple, isolated and eigenfunctions corresponding to eigenvalue other than $\la_1(\Om)$ changes sign for all $1<p<\infty$. The variational characterization of second eigenvalue and the Sharp lower bounds on the first and second eigenvalue  remains open question. In the present paper, we prove the variational characterization of the second eigenvalue of the operator associated to the problem $(P_\la)$. Also, we consider the following shape optimization problems 
\begin{align}\label{fs18}
& \inf\{\la_1(\Om): |\Om|=c  \},\\ \label{fs19}
& \inf  \{\la_2(\Om): |\Om|=c  \},
\end{align}
where $c$ is a positive number. For the optimization problem \eqref{fs18}, we prove the Faber-Krahn inequality (See Theorem \ref{fsthm2}) which says that 
\begin{center}
	``In the class of all domains with fixed volume, the ball has the smallest first eigenvalue."
\end{center}
Corresponding to the optimization problem \eqref{fs19}, we first prove a result for nodal domains (See Lemma \ref{fslem7}) whose statement can be rephrased as
\begin{center}
	``Restriction of an eigenfunction to a nodal domain is not an eigenfunction of this nodal domain." 
\end{center}
This Lemma is due to the nonlocal nature of the operator. Next we prove the Nonlocal Hong-Krahn-Szego inequality for the operator associated to problem $(P_\la)$ (See Theorem \ref{fsthm3}) which states that 
\begin{center}
	``In the class of all domains with fixed volume, the   smallest second eigenvalue is obtained for 	the disjoint union of two balls." 
\end{center}
It implies shape optimization problem  \eqref{fs19} does not admit a solution.  Since the Rayleigh quotient corresponding to problem $(P_\la)$  does not follow the scale invariance, there is significant amount of difference in handling the combined effects  of  $p$-Laplacian and nonlocal $p$-Laplacian of zero order.  With this introduction we will state our main results:
\begin{Theorem}\label{fsthm1}
	Let $1<p<\infty$ and $\Om \subset \mathbb{R}^N$ be an open and bounded set. Then there exists a positive number $\la_2(\Om)$ with the following properties:
	\begin{enumerate}
		\item $\la_2(\Om)$ is an eigenvalue of the operator $\mathcal{L}_{J,p}$.
		\item $\la_2(\Om)> \la_1(\Om)$.
		\item   if $\la > \la_1(\Om)$ is an eigenvalue then $\la\geq \la_2(\Om)$.
	\end{enumerate}
	Furthermore, $\la_2(\Om)$ has the following variational characterization
	\begin{align*} 
	\la_2(\Om)= \inf_{\ga \in \Gamma}\sup_{u\in \ga}\left(\int_{\Om}|\na u|^{p} ~ dx +\int_{\mathbb{R}^N}\int_{\mathbb{R}^N} |u(x)-u(y)|^p J(x-y)~dxdy\right), 
	\end{align*}
	where $\Gamma =\{\ga \in C([-1,1], \mathcal{M}): \ga(-1)=-\phi_{1} \;\mbox{and}\; \ga(1)=\phi_1\}$, $\phi_1$ is the normalized eigenfunction corresponding to $\la_1(\Om)$ and $\mathcal{M}$ is defined \eqref{fs6}. 
\end{Theorem}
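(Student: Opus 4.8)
The plan is to mimic the classical variational characterization of the second eigenvalue of the $p$-Laplacian (as in \cite{cfg}), adapted to the present non-scale-invariant functional. Write $R(u) = \int_\Om |\nabla u|^p\,dx + \int_{\mathbb{R}^N}\int_{\mathbb{R}^N}|u(x)-u(y)|^p J(x-y)\,dxdy$ and let $\mathcal M = \{u \in W_0^{1,p}(\Om) : \int_\Om |u|^p\,dx = 1\}$ be the constraint manifold from \eqref{fs6}. Then $\la_1(\Om) = \min_{\mathcal M} R$ and $\pm\phi_1$ are the two minimizers. Define $c := \inf_{\ga \in \Ga}\sup_{u \in \ga} R(u)$ over the path family $\Ga$ described in the statement. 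The first step is to show $c > \la_1(\Om)$: since the only global minimizers on $\mathcal M$ are $\pm\phi_1$, any path from $-\phi_1$ to $\phi_1$ must leave a neighborhood of $\{\pm\phi_1\}$, and a deformation/mountain-pass-geometry argument (using that $R$ restricted to $\mathcal M$ satisfies Palais--Smale and has $\pm\phi_1$ as strict local minima, the latter coming from simplicity and isolation of $\la_1$ noted after Theorem \ref{fsthm4}) shows the sup along any such path strictly exceeds $\la_1(\Om)$.

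Next I would prove that $c$ is attained and is an eigenvalue. Here the key input is a nonsmooth mountain pass theorem / the Lusternik--Schnirelmann deformation lemma on the $C^1$ Banach manifold $\mathcal M$: one verifies the Palais--Smale condition for $R|_{\mathcal M}$ (coercivity of $R$ on $W_0^{1,p}$ plus the $(S_+)$-property of $-\De_p$ and weak continuity of the nonlocal term give compactness of PS sequences), and concludes that $c$ is a critical value, hence $c = \la_c(\Om)$ for some eigenvalue $\la_c(\Om)$. Combined with step one, $\la_c(\Om) > \la_1(\Om)$, so defining $\la_2(\Om) := c$ gives properties (1) and (2).

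For property (3) — that $\la_2(\Om)$ is the \emph{smallest} eigenvalue above $\la_1(\Om)$, equivalently the variational min-max value equals the spectral second eigenvalue — I would argue by contradiction: suppose $\mu$ is an eigenvalue with $\la_1(\Om) < \mu < c$, with eigenfunction $u_\mu$. Since $\mu > \la_1(\Om)$, $u_\mu$ changes sign (by the discrete Picone remark after Theorem \ref{fsthm4}); normalize $u_\mu \in \mathcal M$ and note $R(u_\mu) = \mu$. The strategy is to build an explicit path in $\Ga$ through $u_\mu$ (or through $u_\mu^+/\|u_\mu^+\|_p$ and $-u_\mu^-/\|u_\mu^-\|_p$) along which $R$ stays $\le \mu < c$, contradicting the definition of $c$; the construction interpolates between $\phi_1$ and the positive part, between the parts of $u_\mu$, etc., controlling $R$ on each segment.

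The main obstacle I expect is precisely this last path construction: because $R$ is \emph{not} scale-invariant (unlike the fractional $p$-Laplacian Rayleigh quotient in \cite{second}), one cannot freely renormalize along the homotopy without changing the value of $R$, so the interpolating curves must be chosen with care to keep both the $L^p$-normalization on $\mathcal M$ and the bound $R \le \mu$ simultaneously. Controlling the cross terms in the nonlocal double integral when gluing $u_\mu^+$ and $u_\mu^-$ — since these parts have overlapping "interaction range" through the kernel $J$, unlike the disjoint-support situation for $\nabla$ — is the delicate point, and will require a convexity/subadditivity estimate on the nonlocal energy together with a homotopy that shrinks one part while growing the other monotonically in $R$.
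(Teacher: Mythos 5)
Your outline follows the same overall strategy as the paper (a mountain--pass value on the constraint manifold $\mathcal M$, Palais--Smale for the restricted functional, then a contradiction argument showing no eigenvalue lies strictly between $\la_1(\Om)$ and the min--max level), but the decisive step is exactly the one you defer: you never construct the admissible path at levels $\le \mu$ through a sign-changing eigenfunction, you only announce that it ``will require a convexity/subadditivity estimate.'' That construction is where essentially all of the content of the paper's proof lives. Concretely, the paper first tests the eigenvalue equation with $u^{+}$ and $u^{-}$ to obtain the inequalities \eqref{fs11}--\eqref{fs12}; then it controls the three explicit curves $\ga_1(t)=\frac{u^{+}-(1-t)u^{-}}{\|u^{+}-(1-t)u^{-}\|_{L^p}}$, $\ga_2(t)=\frac{[(1-t)(u^{+})^p+t(u^{-})^p]^{1/p}}{\|(1-t)(u^{+})^p+t(u^{-})^p\|_{L^p}}$, $\ga_3(t)=\frac{(1-t)u^{+}-u^{-}}{\|(1-t)u^{+}-u^{-}\|_{L^p}}$ by two specific inputs: the pointwise inequality of Lemma \ref{fslem3} (from \cite{cfg}) applied with $U=u^{+}(x)-u^{+}(y)$, $V=u^{-}(x)-u^{-}(y)$, which is precisely what absorbs the nonlocal cross terms you worry about, and the hidden-convexity estimate of Lemma \ref{fslem4} (after \cite{pal}) for the curve $\sigma_t=((1-t)(u^{+})^p+t(u^{-})^p)^{1/p}$. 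Moreover, even after these segments are built you are not done: the endpoint $u^{-}/\|u^{-}\|_{L^p}$ is not $\pm\phi_1$, and the paper needs Lemmas \ref{fslem1} and \ref{fslem2} (connected components of sublevel sets of $\tilde I$ on $\mathcal M$ are arcwise connected and contain critical points), the fact that $u^{-}/\|u^{-}\|_{L^p}$ is \emph{not} a critical point (it vanishes on a set of positive measure), and the evenness of $\tilde I$ to produce the final segments $\pm\ga_4$ reaching $\pm\phi_1$ below the critical level. None of this machinery appears in your proposal, so property (3) is not proved.

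A secondary point: you misidentify the source of the difficulty. The functional $R$ is positively $p$-homogeneous in $u$ (both the gradient term and the double integral scale like $|t|^p$ under $u\mapsto tu$), so renormalizing onto $\mathcal M$ along a homotopy simply divides by $\|\cdot\|_{L^p}^p$, exactly as in the scale-invariant fractional case of \cite{second}; the lack of scale invariance mentioned in the introduction refers to dilations of the domain $x\mapsto tx$, which is relevant for the Faber--Krahn and Hong--Krahn--Szego results, not for this path construction. The genuine delicacy is the interaction of $u^{+}$ and $u^{-}$ through the kernel $J$, and it is resolved not by a generic ``subadditivity'' of the nonlocal energy but by the two lemmas quoted above together with the identities obtained from testing with $u^{\pm}$. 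Finally, note that the paper also identifies the min--max value with the genus-based second eigenvalue of \cite{rossi1} (inequality in one direction by symmetrizing a path into a genus-two set); if you instead simply \emph{define} $\la_2(\Om)$ as your value $c$, that is acceptable for the statement as written, but the burden of properties (1)--(3) then rests entirely on the missing construction.
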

\begin{Theorem}\label{fsthm2}
	(Faber-Krahn inequality): Let $p\geq 2$, $c$ be a positive real number and $B$ be the ball of volume $c$. Then
	\begin{align*}
	\la_1(B) = \inf\left\{  \la_1(\Om), \; \Om \text{ open subset of } \mathbb{R}^N, \; |\Om|=c   \right\}.
	\end{align*}
\end{Theorem}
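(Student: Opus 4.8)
The plan is to reduce the problem to the purely local Faber--Krahn inequality by means of Schwarz symmetrization (spherically decreasing rearrangement) and a Riesz-type rearrangement inequality for the nonlocal term. Let $\Om$ be an open set with $|\Om|=c$ and let $u=\phi_1\in W_0^{1,p}(\Om)$ be the (nonnegative, by Theorem \ref{fsthm4}) normalized first eigenfunction, so that
\[
\la_1(\Om)=\int_{\Om}|\na u|^p\,dx+\int_{\R^N}\int_{\R^N}|u(x)-u(y)|^p J(x-y)\,dx\,dy,\qquad \int_{\Om}|u|^p\,dx=1.
\]
Let $u^*$ be the Schwarz symmetrization of $u$; it is supported in the ball $B$ of volume $c$, belongs to $W_0^{1,p}(B)$, and satisfies $\int_B |u^*|^p\,dx=\int_\Om |u|^p\,dx=1$. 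I would then use $u^*$ as a test function in the variational characterization of $\la_1(B)$ from Theorem \ref{fsthm4}, so that
\[
\la_1(B)\le \int_{B}|\na u^*|^p\,dx+\int_{\R^N}\int_{\R^N}|u^*(x)-u^*(y)|^p J(x-y)\,dx\,dy,
\]
and the theorem follows once I show the right-hand side does not exceed $\la_1(\Om)$.

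The two pieces are handled separately. For the gradient term, the Pólya--Szegő inequality gives $\int_{\R^N}|\na u^*|^p\,dx\le \int_{\R^N}|\na u|^p\,dx$, which is classical for $1<p<\infty$. For the nonlocal term, I would rewrite it as a double integral and invoke the Riesz rearrangement inequality in the form adapted to such energies: since $t\mapsto |t|^p$ is convex with $|a-b|^p = $ (a functional of $a,b$) one uses the standard fact that for nonnegative $u$ and a radially symmetric decreasing kernel $J$,
\[
\int_{\R^N}\int_{\R^N}|u^*(x)-u^*(y)|^p J(x-y)\,dx\,dy\le \int_{\R^N}\int_{\R^N}|u(x)-u(y)|^p J(x-y)\,dx\,dy .
\]
This is precisely the rearrangement inequality used for fractional energies (and, more generally, for convex nonlocal interaction energies); it can be proved via the layer-cake representation $|u(x)-u(y)|^p = \int_0^\infty \dots$ reducing to the Riesz inequality for characteristic functions of superlevel sets, together with the fact that superlevel sets of $u^*$ are balls of the same measure as those of $u$. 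Combining the two estimates yields the right-hand side above $\le \la_1(\Om)$, hence $\la_1(B)\le \la_1(\Om)$; since $B$ is itself admissible, the infimum is attained at $B$.

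The main obstacle is the nonlocal rearrangement inequality: unlike the gradient term, it does not follow from a one-line citation to Pólya--Szegő, and one must be careful that $J$ is only assumed radially symmetric, nonnegative, continuous with compact support (not necessarily radially \emph{decreasing}). If $J$ is not monotone one cannot directly apply Riesz; however, since we only need an inequality, I would replace $J$ by its own spherically decreasing rearrangement $J^*$ only when that helps, and otherwise argue via the subadditivity/convexity structure: write $|u(x)-u(y)|^p$ using the identity linking it to $\int\int$ of indicators and reduce to showing $\int\int \chi_{A}(x)\chi_{B}(y)J(x-y)\le \int\int \chi_{A^*}(x)\chi_{B^*}(y)J(x-y)$ for superlevel sets, which does hold for any radial $J\ge 0$ by Riesz's inequality (Riesz's inequality requires only that one of the three functions is a radial decreasing rearrangement, and $\chi_{A^*},\chi_{B^*}$ are already symmetric decreasing). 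A secondary technical point is checking that $u^*\in W_0^{1,p}(B)$ and that the symmetrization does not increase the constraint functional — both standard, but worth stating. Finally, the hypothesis $p\ge 2$ enters only through Theorem \ref{fsthm4} (existence, sign, and regularity of $\phi_1$); the rearrangement estimates themselves are valid for all $1<p<\infty$.
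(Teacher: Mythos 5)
Your main line coincides with the paper's proof: symmetrize the normalized first eigenfunction $\phi_1$ of $\Om$, use the P\'olya--Szeg\H{o} inequality for the gradient term and the Almgren--Lieb rearrangement inequality \eqref{fs13} for the nonlocal term, observe that the $L^p$-constraint is preserved, and test $\phi_1^*$ in the Rayleigh quotient for the ball $B=\Om^*$. (The paper additionally records the rigidity statement --- equality forces $\Om$ to be a ball, via Frank--Seiringer --- which is not needed for the statement as written but is used for the strict inequality in Theorem \ref{fsthm3}.)

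The one genuine problem is your proposed fix for a kernel that is radial but not radially decreasing. Riesz's rearrangement inequality reads $\iint f(x)g(y)h(x-y)\,dx\,dy\le\iint f^*(x)g^*(y)h^*(x-y)\,dx\,dy$; to keep the kernel un-rearranged on the right-hand side you need $J=J^*$, i.e.\ $J$ radially \emph{decreasing} --- it is not enough that the other two factors are already symmetric decreasing. The inequality you assert for superlevel sets is in fact false for a merely radial $J\ge 0$: take $J$ supported in the annulus $\{1\le |z|\le 2\}$ and $A=B$ a union of two tiny balls whose centers are at distance $3/2$; then $\iint \chi_A(x)\chi_B(y)J(x-y)\,dx\,dy>0$ while $\iint \chi_{A^*}(x)\chi_{B^*}(y)J(x-y)\,dx\,dy=0$, so the cross term \emph{decreases} under symmetrization, and correspondingly the energy $\iint |u(x)-u(y)|^pJ(x-y)\,dx\,dy$ can strictly \emph{increase} when $u$ is replaced by $u^*$ (approximate $\chi_A$ by a $W^{1,p}_0$ function). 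So for non-monotone $J$ the rearrangement step, and hence this proof, genuinely fails; the paper handles this by strengthening the hypotheses at the start of Section 4, where $J$ is assumed radially symmetric \emph{decreasing} (so that $J^*=J$) and \eqref{fs13} is quoted from Almgren--Lieb. Under that standing assumption your argument is correct and is the paper's argument; the claim that the nonlocal rearrangement inequality holds for every radial kernel should simply be deleted.
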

Next we will state theorem related to a sharp lower bound in $\la_2(\Om)$.
\begin{Theorem}\label{fsthm3}(Nonlocal Hong-Krahn-Szego inequality)
	Let $p\geq 2$ and $\Om \subset\mathbb{R}^N$ be an open bounded set. Assume $B$ is any ball of volume $|\Om|/2$. Then
	\begin{align}\label{fs15}
	\la_2(\Om)>\la_1(B). 
	\end{align}
	Moreover, equality is never attained in \eqref{fs15}, but the estimate is sharp in the following sense: if $\{s_n\}$ and $\{t_n\}$ are two sequences in $\mathbb{R}^N$ such that $\ds\lim_{n\ra \infty}|s_n-t_n|= +\infty$ and $\Om_n:= B_R(s_n)\cup B_R(t_n)$ then  $\ds \lim_{n\ra \infty}\la_2(\Om_n)= \la_1(B_R) $.
\end{Theorem}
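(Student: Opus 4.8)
The plan is to adapt the classical Hong--Krahn--Szego argument to the operator $\mathcal L_{J,p}$, in four parts: a truncation estimate at the nodal domains of a second eigenfunction, a comparison with a half-volume ball via the Faber--Krahn inequality (Theorem \ref{fsthm2}) together with the monotonicity of $\la_1$, the use of the nodal-domain Lemma \ref{fslem7} to upgrade the resulting inequality to a strict one, and finally a short direct computation on two far-apart balls exploiting that $J$ has compact support. Throughout, the standing hypothesis $p\ge 2$ enters only through Theorem \ref{fsthm2} and the $C^{1,\al}$-regularity of eigenfunctions (Theorem \ref{fsthm4}).

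\emph{Truncation.} Let $u$ be an eigenfunction associated with $\la_2(\Om)$. By property (2) of Theorem \ref{fsthm1} and the remark after Theorem \ref{fsthm4} (eigenfunctions for eigenvalues other than $\la_1(\Om)$ change sign), $u$ changes sign, so $\Om^+:=\{u>0\}$ and $\Om^-:=\{u<0\}$ are non-empty open subsets of $\Om$; since $|\Om^+|+|\Om^-|\le|\Om|$, we may assume $|\Om^+|\le|\Om|/2$. Testing the weak form of $\mathcal L_{J,p}u=\la_2(\Om)|u|^{p-2}u$ against $u^+:=\max\{u,0\}\in W_0^{1,p}(\Om^+)$, using $\na u^+=\na u$ a.e.\ on $\{u>0\}$ (and $\na u^+=0$ elsewhere) together with the elementary inequality $|a^+-b^+|^p\le|a-b|^{p-2}(a-b)(a^+-b^+)$, valid for all $a,b\in\R$, to bound the nonlocal bilinear term from below, one obtains
\begin{equation*}
\int_{\Om}|\na u^+|^p\,dx+\int_{\R^N}\int_{\R^N}|u^+(x)-u^+(y)|^pJ(x-y)\,dx\,dy\ \le\ \la_2(\Om)\int_{\Om}(u^+)^p\,dx .
\end{equation*}
Since $u^+\in W_0^{1,p}(\Om^+)$ is admissible in the variational characterization of $\la_1(\Om^+)$ from Theorem \ref{fsthm4}, this yields $\la_1(\Om^+)\le\la_2(\Om)$.

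\emph{Comparison and strictness.} By Theorem \ref{fsthm2} applied to $\Om^+$ one has $\la_1(\Om^+)\ge\la_1(B^\ast)$, where $B^\ast$ is a ball with $|B^\ast|=|\Om^+|\le|\Om|/2=|B|$; and since $\la_1$ does not increase when the domain is enlarged (its defining functional does not depend on the ambient set), $\la_1(B^\ast)\ge\la_1(B)$. Chaining these, $\la_2(\Om)\ge\la_1(\Om^+)\ge\la_1(B^\ast)\ge\la_1(B)$, which is \eqref{fs15} with ``$\ge$''. If equality held in \eqref{fs15}, all three inequalities would be equalities; in particular $\la_1(\Om^+)=\la_2(\Om)$, so $u^+$ would realize the infimum defining $\la_1(\Om^+)$, hence be a first eigenfunction of $\Om^+$. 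But $u^+$ is exactly the restriction to the nodal domain $\Om^+$ of the eigenfunction $u$ of $\mathcal L_{J,p}$ on $\Om$, which by Lemma \ref{fslem7} is never an eigenfunction of that nodal domain --- a contradiction. Hence $\la_2(\Om)>\la_1(B)$, and equality is never attained. I expect this last step to be the main obstacle: one must make sure that equality in \eqref{fs15} propagates all the way down so that $u^+$ becomes a genuine \emph{first} eigenfunction of its nodal domain, not merely an almost-minimizer, for Lemma \ref{fslem7} to bite; this is exactly where the nonlocal character of $\mathcal L_{J,p}$ is decisive, and it is the reason the inequality is strict, in contrast with the local Hong--Krahn--Szego inequality where two equal disjoint balls realize equality.

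\emph{Sharpness.} Fix $R>0$ and suppose $|s_n-t_n|\to\infty$. Because $J$ has compact support, for $n$ large the balls $B_R(s_n)$ and $B_R(t_n)$ are disjoint and $J(x-y)=0$ whenever $x\in B_R(s_n)$, $y\in B_R(t_n)$, so the gradient and nonlocal energies in the functional $F(v):=\int_{\R^N}|\na v|^p\,dx+\int_{\R^N}\int_{\R^N}|v(x)-v(y)|^pJ(x-y)\,dx\,dy$ of Theorem \ref{fsthm1} decouple over the two balls; in particular $\la_1(\Om_n)=\la_1(B_R)$. Let $\phi_n,\psi_n$ be the $L^p$-normalized positive first eigenfunctions of $B_R(s_n)$ and $B_R(t_n)$, extended by $0$; the decoupling gives, for all $a,b\in\R$, $\|a\phi_n+b\psi_n\|_{L^p}^p=|a|^p+|b|^p$ and $F(a\phi_n+b\psi_n)=(|a|^p+|b|^p)\,\la_1(B_R)$. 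The normalized first eigenfunction $\phi_1$ of $\Om_n$ lies in $\mathrm{span}\{\phi_n,\psi_n\}$, so $\phi_1$ and $-\phi_1$ correspond to points of the connected curve $\{(a,b):|a|^p+|b|^p=1\}$; an arc of this curve joining them produces a path $\ga\in\Ga$ along which $F\equiv\la_1(B_R)$, whence $\la_2(\Om_n)\le\la_1(B_R)$. Together with $\la_2(\Om_n)\ge\inf_{\mathcal M}F=\la_1(\Om_n)=\la_1(B_R)$, this forces $\la_2(\Om_n)=\la_1(B_R)$ for all large $n$, so $\lim_n\la_2(\Om_n)=\la_1(B_R)$. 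This is consistent with the strict inequality above: these $\Om_n$ are precisely the degenerate configurations for which $\la_1$ fails to be simple, so that $\la_2$ collapses onto $\la_1$.
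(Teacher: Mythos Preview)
Your argument for the strict inequality \eqref{fs15} is essentially the paper's, only reorganized: you pick the nodal domain of smaller measure and work with it alone, whereas the paper applies Lemma \ref{fslem7} first to get $\la_2(\Om)>\max\{\la_1(\Om^+),\la_1(\Om^-)\}$, then Faber--Krahn on each piece, and finally a three-case comparison to show $\max\{\la_1(B_{r_1}),\la_1(B_{r_2})\}\ge\la_1(B)$ with $|B|=|\Om|/2$. The paper also invokes Lemma \ref{fslem7} \emph{directly} for strictness, rather than, as you do, first proving a non-strict bound by testing with $u^+$ and then deriving a contradiction from equality. For sharpness the routes genuinely diverge: the paper uses the genus characterization $\la_2(\Om_n)=\inf_{A\in\mathcal A}\sup_A\mathcal H_{J,p}(u,u)$, an odd map $\mathbb S^1\to\mathcal M$ built from $\phi_{s_n},\phi_{t_n}$, and Lemma \ref{fslem6}(i) to control the cross term, obtaining $\la_2(\Om_n)\le\la_1(B_R)+C\,J(|s_n-t_n|-2R)\to\la_1(B_R)$; you instead exploit the compact support of $J$ to force exact decoupling of the two balls and then run a path in $\Gamma$ via the mountain-pass characterization of Theorem \ref{fsthm1}. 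Your way is more direct and bypasses Lemma \ref{fslem6}(i).

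There is, however, a genuine internal inconsistency in your sharpness step. Once the two balls are far enough apart that $J(x-y)\equiv0$ across them, your own computation gives $\la_2(\Om_n)=\la_1(B_R)$ \emph{exactly}, which contradicts both the clause ``equality is never attained in \eqref{fs15}'' and your first part applied to $\Om=\Om_n$. The underlying reason is that in this decoupled regime $\la_1(\Om_n)$ is no longer simple, so Theorem \ref{fsthm1}(2) and the sign-changing property you invoke in the truncation step both fail for $\Om_n$; moreover the class $\Gamma$ in Theorem \ref{fsthm1} is defined in terms of \emph{the} normalized $\phi_1$, which is now non-unique. Your final sentence calling this ``consistent'' is not correct --- it is a contradiction with what you are asked to prove. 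The paper avoids asserting equality for any finite $n$, phrasing the upper bound as $\la_1(B_R)$ plus an error that merely tends to zero, and pairing it with the already-established strict lower bound to conclude only the limit; if you wish to match the paper's proof, replace your exact-equality argument by that genus-plus-Lemma \ref{fslem6}(i) estimate and drop the attempted reconciliation at the end.
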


The paper is organized as follows: In Section 2 we give the Variational Framework and Preliminary results. In Section 3 we  give the proof of Theorem \ref{fsthm1}.
In Section 4 we give the sharp lower bounds on the first and second eigenvalue of the operator associated to problem $(P_\la)$. In particular, we prove the Faber-Krahn inequality and nonlocal Hong-Krahn-Szego inequality. In Section 5, we discuss the eigenvalue problem associated with the combination of $p$-Laplacian  and fractional $p$-Laplacian. 
\section{Variational Framework and Preliminary results}

The energy functional $I: W^{1,p}_0 (\Om) \ra \mb R $ associated with problem $(P_\la)$ is   given by
\begin{align*} 
I(u)=  \int_{\Om}|\na u|^{p} ~ dx +\int_{\mathbb{R}^N}\int_{\mathbb{R}^N} |u(x)-u(y)|^p J(x-y)~dxdy -\la  \int_{\Om}|u|^p dx.
\end{align*}
 Note that $I$ is well defined on $W^{1,p}_0 (\Om)$ by extending  $u=0$ on $\mathbb{R}^N\setminus\Om$. Moreover, a direct computation show that  $I\in C^{1}( W^{1,p}_0 (\Om),\mb R)$ with
\begin{equation*}
\begin{aligned}
\langle I^{\prime}(u),\phi \rangle  =  p\; \mathcal{H}_{J,p}(u,\phi) 
 -  \la p \int_{\Om}|u|^{p-2}u \phi dx, 
\end{aligned}
\end{equation*}
for any $\phi\in  W^{1,p}_0 (\Om)$. 
\begin{Definition}
	A function $u \in W^{1,p}_0(\Om)$ is a solution of $(P_\la)$ if $u$ satisfies the  equation 
	\begin{align*}
	\mathcal{H}_{J,p}(u,\phi)= \la \int_{\Om}|u|^{p-2}u \phi~ dx,\; \; \text{for all } \phi \in W^{1,p}_0(\Om),
	\end{align*}
	\noi where
	\begin{align*}
	\mathcal{H}_{J,p}(u,\phi):=&  \int_{\Om}|\na u|^{p-2}\na u \cdot \na \phi~ dx\\ & \quad + \int_{\mathbb{R}^N} \int_{\mathbb{R}^N}  |u(x)-u(y)|^{p-2}(u(x)-u(y))(\phi(x)-\phi(y))J(x-y)~ dx dy.
	\end{align*}
\end{Definition}

 Also $\tilde{I}:= I|_{\mc M}$ is $C^1(W^{1,p}_0 (\Om),\mb R)$, where $\mc M$ is defined as
\begin{align}\label{fs6}
\mc M :=\left\{u\in W^{1,p}_0 (\Om):\;    \; S(u):= \int_{\Om}|u|^p=1\right\}.
\end{align}
 Hence, $u\in \mc M$ is a nontrivial weak solution of the problem $(P_\la)$.
 \begin{Proposition}\label{fsprop1}
 	\cite{AR} Let $Y$ be a Banach space, $F,G \in C^{1}(Y,\mb R)$, $M=\{u\in Y
 	\;|\; G(u)=1\}$ and $u$, $v\in M$. Let $\e>0$ such that
 	$\|u-v\|>\e$ and \[\inf\{F(w): w\in M \;\mbox{and}\;
 	\|w-u\|_{Y}=\e\}>\max\{F(u),F(v)\}.\]
 	\noi Assume that $F$ satisfies the Palais-Smale  condition on $M$ and that
 	\[\Gamma =\{\ga \in C([-1,1], M): \ga(-1)=u \;\mbox{and}\; \ga(1)=v\}\]
 	is non empty. Then $\ds c=\inf_{\ga \in \Gamma}\max_{u\in\ga[-1,1]}
 	F(u) >\max\{F(u),F(v)\}$ is a critical value of $F|_M$.
 \end{Proposition}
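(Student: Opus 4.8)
The plan is to prove the constrained mountain pass theorem directly, following the Ambrosetti--Rabinowitz scheme adapted to the $C^1$ Banach submanifold $M=\{G=1\}$. Throughout I work under the (standard, implicitly assumed) hypothesis that $1$ is a regular value of $G$, so that $M$ is a genuine $C^1$-manifold and the notion of a critical point of $F|_M$ is meaningful through the tangential derivative $(F|_M)'(w)$, which vanishes exactly when $F'(w)=\mu\,G'(w)$ for some Lagrange multiplier $\mu\in\R$.

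First I would verify that the minimax level $c$ is well defined and lies strictly above the endpoints. Since $\Ga\neq\emptyset$ and every $\ga\in\Ga$ maps the compact interval $[-1,1]$ continuously into $M\subset Y$, the number $\max_{t}F(\ga(t))$ is finite, so $c\in\R$. For the strict inequality I use the mountain pass geometry. Fix $\ga\in\Ga$ and set $\phi(t)=\|\ga(t)-u\|$; then $\phi(-1)=0$ while $\phi(1)=\|v-u\|>\e$, so by the intermediate value theorem there is $t_0$ with $\phi(t_0)=\e$, i.e.\ $\ga(t_0)\in M$ satisfies $\|\ga(t_0)-u\|=\e$. Consequently $\max_t F(\ga(t))\ge F(\ga(t_0))\ge \ba$, where $\ba:=\inf\{F(w):w\in M,\ \|w-u\|=\e\}$. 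Taking the infimum over $\ga\in\Ga$ yields $c\ge\ba>\max\{F(u),F(v)\}$ by hypothesis.

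Next I would show that $c$ is a critical value, arguing by contradiction via the quantitative deformation lemma on $M$. Suppose $c$ carries no critical point. Then (PS) on $M$ gives a quantitative gradient bound: there exist $\al>0$ and $\sigma>0$ with $\|(F|_M)'(w)\|\ge\al$ for every $w\in M$ satisfying $|F(w)-c|\le\sigma$; otherwise one extracts a sequence $w_n\in M$ with $F(w_n)\to c$ and $\|(F|_M)'(w_n)\|\to0$, which by (PS) converges to a critical point at level $c$, a contradiction. Fixing $\bar\e<\min\{\sigma,\,c-\max\{F(u),F(v)\}\}$, the deformation lemma produces $\de\in(0,\bar\e)$ and a continuous map $\eta:[0,1]\times M\to M$ with $\eta(0,\cdot)=\mathrm{id}$, with $\eta(s,w)=w$ whenever $|F(w)-c|\ge\bar\e$, with $s\mapsto F(\eta(s,w))$ non-increasing, and with $F(\eta(1,w))\le c-\de$ whenever $F(w)\le c+\de$. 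Since $F(u),F(v)<c-\bar\e$, both endpoints are fixed by $\eta$. Choosing a near-optimal path $\ga\in\Ga$ with $\max_t F(\ga(t))\le c+\de$ and setting $\tilde\ga(t):=\eta(1,\ga(t))$, I obtain $\tilde\ga\in\Ga$ with $\max_t F(\tilde\ga(t))\le c-\de$, contradicting the definition of $c$. Hence $c$ is a critical value of $F|_M$.

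The main obstacle is the deformation lemma on the constraint manifold, not the linear minimax bookkeeping. Because $Y$ is only a Banach space and $F$ merely $C^1$, there is no usable gradient flow; one must instead construct a locally Lipschitz pseudo-gradient vector field for the constrained functional that is tangent to $M$ (using the projection onto $\ker G'$ together with a partition of unity over the strip $\{|F-c|\le\bar\e\}$) and then integrate the associated ODE, checking that the flow both remains on $M$ and decreases $F$ at a rate controlled by $\al$. This is precisely the constrained Ambrosetti--Rabinowitz construction recorded in \cite{AR}, and the delicate points are the tangency of the pseudo-gradient and the global existence of the flow.
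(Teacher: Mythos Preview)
Your argument is the standard Ambrosetti--Rabinowitz mountain pass scheme on a constraint manifold, and the outline is correct: you verify $c\ge\beta>\max\{F(u),F(v)\}$ via the intermediate value theorem, extract a quantitative lower bound on $\|(F|_M)'\|$ near level $c$ from (PS), and then run the deformation lemma with a pseudo-gradient tangent to $M$ to push a near-optimal path strictly below $c$. The only substantive point you flag yourself---construction of a locally Lipschitz tangent pseudo-gradient field on $M$ using the splitting along $\ker G'$ and a partition of unity---is indeed the technical heart, and your treatment of it is accurate.

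For comparison with the paper: the paper does \emph{not} prove this proposition. It is stated with the citation \cite{AR} in the header and used as a black box; no proof or sketch is given. So there is no ``paper's own proof'' to compare against---you have supplied what the authors deliberately outsourced to the literature. Your write-up is essentially the argument one finds in Ambrosetti--Rabinowitz (and in the constrained refinements used by Cuesta--de Figueiredo--Gossez \cite{cfg}), so it is entirely consistent with the intended reference.
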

   Observe that 
 \begin{align*}
\tilde{I}(u)= \int_{\Om}|\na u|^{p} ~ dx +\int_{\mathbb{R}^N}\int_{\mathbb{R}^N} |u(x)-u(y)|^p J(x-y)~dxdy \geq \la_1(\Om) \int_{\Om}|u|^p,
 \end{align*}
  for all $u \in W^{1,p}_0 (\Om)$. It implies for any $u \in \mathcal{M}$, we have $\tilde{I}(u)\geq \la_1(\Om)$. Since  $\tilde{I}(\pm \phi_1)= \la_1(\Om)$, we deduce that $\pm \phi_1$ are the  two global minimum  of $\tilde{I}$ as well as critical points of $\tilde{I}$. 

\noi We will now find the third critical point via Proposition \ref{fsprop1}. 
A norm of derivative of the restriction $\tilde{I}$ of $I$
at $u\in \mc M$ is defined as
\[\|\tilde{I}^{\prime}(u)\|_{*}=\inf\{\|I^{\prime}(u)- t S^{\prime}(u)\|_{*}:  t\in \mb R\}.\]

\begin{Lemma}\label{fslem8}
$\tilde{I}$ satisfies the Palais-Smale  condition on $\mc M$.
\end{Lemma}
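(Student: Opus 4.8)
The plan is to run the classical constrained Palais--Smale argument for the $p$-Laplacian while carrying the zero-order nonlocal term along and checking that it never obstructs compactness. Let $\{u_n\}\subset\mc M$ satisfy $\tilde{I}(u_n)\to c$ for some $c\in\R$ and $\|\tilde{I}'(u_n)\|_*\to 0$. Since $\int_\Om|u_n|^p=1$, we have
\[
\|\na u_n\|_{L^p(\Om)}^p\le\int_\Om|\na u_n|^p\,dx+\int_{\R^N}\int_{\R^N}|u_n(x)-u_n(y)|^pJ(x-y)\,dxdy=\tilde{I}(u_n)+\la ,
\]
so $\{u_n\}$ is bounded in $W^{1,p}_0(\Om)$. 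Up to a subsequence, $u_n\rp u$ in $W^{1,p}_0(\Om)$, $u_n\to u$ in $L^p(\Om)$ and a.e.\ in $\Om$ by the Rellich--Kondrachov theorem; in particular $\int_\Om|u|^p=1$, hence $u\in\mc M$.

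By definition of $\|\tilde{I}'(u_n)\|_*$ there exist $t_n\in\R$ with $\|I'(u_n)-t_nS'(u_n)\|_*\to 0$. Testing with $u_n$ and using $\langle I'(u_n),u_n\rangle=p\,\tilde{I}(u_n)$ together with $\langle S'(u_n),u_n\rangle=p$, one gets $t_n=\tilde{I}(u_n)+o(1)\to c$, so $\{t_n\}$ is bounded. Testing $I'(u_n)-t_nS'(u_n)$ against the bounded sequence $u_n-u$ and noting $\langle S'(u_n),u_n-u\rangle=p\int_\Om|u_n|^{p-2}u_n(u_n-u)\,dx\to 0$ by H\"older and strong $L^p$ convergence, one obtains $\langle I'(u_n),u_n-u\rangle\to 0$, hence $\mc H_{J,p}(u_n,u_n-u)\to 0$. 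Because $J$ is bounded with compact support, the zero-order seminorm $\big(\int_{\R^N}\int_{\R^N}|w(x)-w(y)|^pJ(x-y)\,dxdy\big)^{1/p}$ of a function $w\in W^{1,p}_0(\Om)$ (extended by $0$) is controlled by $\|w\|_{W^{1,p}(\R^N)}$; hence $\mc H_{J,p}(u,\cdot)$ is a bounded linear functional on $W^{1,p}_0(\Om)$ and $\mc H_{J,p}(u,u_n-u)\to 0$ since $u_n-u\rp 0$. Subtracting the last two limits,
\[
\int_\Om\big(|\na u_n|^{p-2}\na u_n-|\na u|^{p-2}\na u\big)\cdot\na(u_n-u)\,dx+\mc R_n\to 0,
\]
where $\mc R_n=\int_{\R^N}\int_{\R^N}\big(|U_n|^{p-2}U_n-|U|^{p-2}U\big)(U_n-U)J(x-y)\,dxdy$ with $U_n=u_n(x)-u_n(y)$ and $U=u(x)-u(y)$. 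By the monotonicity inequality $(|a|^{p-2}a-|b|^{p-2}b)(a-b)\ge 0$ both summands are nonnegative, so each tends to $0$; in particular the gradient term does. Finally, the standard Simon-type inequalities for $\xi\mapsto|\xi|^{p-2}\xi$ — namely $(|a|^{p-2}a-|b|^{p-2}b)\cdot(a-b)\ge c_p|a-b|^p$ when $p\ge2$, and $|a-b|^p\le C_p\big[(|a|^{p-2}a-|b|^{p-2}b)\cdot(a-b)\big]^{p/2}\big(|a|^p+|b|^p\big)^{(2-p)/2}$ combined with H\"older and the $L^p$-boundedness of $\{\na u_n\}$ when $1<p<2$ — give $\na u_n\to\na u$ in $L^p(\Om)$, i.e.\ $u_n\to u$ in $W^{1,p}_0(\Om)$, which is the desired conclusion.

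I expect the delicate points to be twofold: first, pinning down the Lagrange multipliers $t_n$ and showing $\{t_n\}$ is bounded, which is why one must test with $u_n$ before testing with $u_n-u$ and exploit the boundedness of $\tilde{I}(u_n)$; and second, the continuity estimate $\int_{\R^N}\int_{\R^N}|w(x)-w(y)|^pJ(x-y)\,dxdy\le C\|w\|_{W^{1,p}(\R^N)}^p$ for $w\in W^{1,p}_0(\Om)$, which legitimizes $\mc H_{J,p}(u,\cdot)\in(W^{1,p}_0(\Om))^*$ and relies on $J$ having compact support (only increments over bounded distances enter). Once these are in hand, the nonlocal part of $\mc H_{J,p}$ only helps, since $\xi\mapsto|\xi|^{p-2}\xi$ is monotone in both slots, and the proof reduces to the familiar $(S_+)$-type argument for the $p$-Laplacian.
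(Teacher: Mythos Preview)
Your argument is correct and follows essentially the same route as the paper: boundedness of the PS sequence, extraction of a weakly convergent subsequence with strong $L^p$ limit, boundedness of the Lagrange multipliers $t_n$ by testing with $u_n$, and then the $(S_+)$-type conclusion via $\mathcal{H}_{J,p}(u,u_n-u)\to 0$ combined with $\mathcal{H}_{J,p}(u_n,u_n-u)\to 0$ and the standard Simon inequalities. Your write-up is in fact more explicit than the paper's on two points---why $\mathcal{H}_{J,p}(u,\cdot)$ is a bounded linear functional on $W^{1,p}_0(\Om)$, and why the nonnegative nonlocal monotonicity term can be discarded---but the underlying mechanism is identical.
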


\begin{proof}
Let $\{u_n\}_{n\in N}$ be a sequence in $\mc M$ such that $\tilde{I}(u_n)\ra c$ and $\|\tilde{I}^{\prime}(u_n)\|_{*} \ra 0 $ for some $c \in \mathbb{R}$. As a consequence,  there exists sequence  $t_n\in \mb R$ such that  for all $\phi \in W^{1,p}_0 (\Om) $ and  for some $C>0$,
\begin{align}\label{fs2}
|I(u_n)|\leq C \text{ and } \left| \mathcal{H}_{J,p}(u_n, \phi) - t_n \int_{\Om}
|u_{n}|^{p-2} u_{n} \phi ~dx \right|\leq \e_{n}\|\phi \|
\end{align}
where $\e_n\ra 0$. From \eqref{fs2} and  Sobolev embedding, we obtain  $\{u_n\}$ is bounded in $W^{1,p}_0 (\Om)$. It implies up to a subsequence, still denoted by $u_n$, there exists 
$u \in W^{1,p}_0 (\Om)$ such that
$u_n\rightharpoonup u$ weakly in $W^{1,p}_0 (\Om)$. Moreover, $u_{n}\ra u $
strongly in $L^{p}(\Om)$ for all $1\leq p< p^*$ and $u_n \ra u $ a.e in $\Om$. Let  $\phi=u_n$
in \eqref{fs2}, we get
\[|t_k|\leq  \int_{\Om}| \na u_n|^{p} ~ dx + \int_{\mathbb{R}^N}\int_{\mathbb{R}^N}|u_n(x)-u_n(y)|^p J(x-y)~dxdy  + \e_{n}\|u_n\|\leq C.\]
Thus  $t_n$ is bounded sequence  i.e,  up to a subsequence  $t_n \ra t $  as $n \ra \infty$, for some $t \in \mathbb{R}$.\\
\textbf{Claim :} $u_n\ra u$
strongly in $W^{1,p}_0 (\Om)$. Since $u_n\rightharpoonup u$ weakly in $W^{1,p}_0 (\Om)$, we
get
\begin{equation}\label{fs3}
\begin{aligned}
\mathcal{H}_{J,p}(u,u_n) \ra \mathcal{H}_{J,p}(u,u)
\text{ as } n\rightarrow \infty.
\end{aligned}
\end{equation}
Using the inequality which states that: for all  $ a, b \in \mathbb{R}^{n}$, we have 
\begin{equation*}
\begin{aligned}
|a-b|^{r} \leq  \left\{
\begin{array}{ll}
C_{r}\left((|a|^{r-2}a-|b|^{r-2}b)(a-b)\right)^{\frac{r}{2}}\left(|a|^r+|b|^{r}\right)^{\frac{2-r}{2}} , &  \text{ if }1< r < 2, \\
2^{r-2}(|a|^{r-2}a-|b|^{r-2}b)(a-b)   & \text{ if } r \geq 2 .\\
\end{array} 
\right.
\end{aligned}	
\end{equation*}
with the fact that   $\ld \tilde{I^{\prime}}(u_n),(u_n-u)\rd= o(\e_n)$ and \eqref{fs3}, we deduce that 
\begin{align*}
\int_{\Om}|\na (u_n-u)|^p ~ dx + \int_{\mathbb{R}^N}\int_{\mathbb{R}^N} |(u_n-u)(x)-(u_n-u)(y)|^{p} J(x-y) ~ dx dy   \lra 0\;\mbox{as}\; n\ra\infty.
\end{align*}
Thus, $u_n$ converges strongly to  $u$ in $W^{1,p}_0 (\Om)$.\QED
\end{proof}
  Define 
\begin{align}\label{fs7}
\la _*=\ds \inf_{\ga \in \Gamma}\max_{u\in\ga[-1,1]} \tilde{I}(u), 
\end{align} 
 where $\Gamma =\{\ga \in C([-1,1], \mathcal{M}): \ga(-1)=-\phi_{1} \;\mbox{and}\; \ga(1)=\phi_1\}$. Let $\ga(t)= \frac{t\phi_1+(1-|t|)\phi}{\|t\phi_1+(1-|t|)\phi\|_{L^p}}$, where $\phi \not \in \mathbb{R} \phi_1$. It shows that $\Gamma$ is nonempty.  Using Proposition \ref{fsprop1},  $\la_*$ is a critical point of $\tilde{I}$ and $\la_*>\la_1(\Om)$.

\begin{Proposition}\label{fsprop2}
	Let $A$ and $B$ be two bounded open sets in $\mathbb{R}^N$ with $A \subsetneq \; B$ and $B$ connected  then $\la_1(A)> \la_1(B)$.
\end{Proposition}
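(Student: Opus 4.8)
The plan is to use the variational characterization of $\lambda_1$ together with the strong maximum principle / unique continuation flavour of Theorem 1.1 (simplicity and constant sign of the first eigenfunction), adapted to the nonlocal setting. First I would note the easy inequality: since $A \subsetneq B$, every $u \in W^{1,p}_0(A)$ extends by zero to an element of $W^{1,p}_0(B)$, and this extension has the same value of the Rayleigh quotient
\[
\mathcal R(u)=\frac{\int_{\Om}|\na u|^p\,dx+\int_{\R^N}\int_{\R^N}|u(x)-u(y)|^pJ(x-y)\,dx\,dy}{\int_{\Om}|u|^p\,dx}
\]
(the gradient integral is unchanged, and the double integral over $\R^N\times\R^N$ is intrinsically defined once $u$ is extended by zero, so it too is unchanged). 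Taking the infimum gives $\lambda_1(A)\ge\lambda_1(B)$. So the whole content is the \emph{strictness}.

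The key step is to rule out equality. Suppose $\lambda_1(A)=\lambda_1(B)=:\lambda$. Let $\phi_A\ge 0$ be the normalized first eigenfunction on $A$, extended by zero to $\R^N$. Then $\phi_A\in W^{1,p}_0(B)$ and $\mathcal R(\phi_A)=\lambda=\lambda_1(B)$, so $\phi_A$ is a minimizer of the Rayleigh quotient on $W^{1,p}_0(B)$, hence a first eigenfunction on $B$: it satisfies $\mc H_{J,p}(\phi_A,\psi)=\lambda\int_B|\phi_A|^{p-2}\phi_A\psi\,dx$ for all $\psi\in W^{1,p}_0(B)$. By Theorem 1.1 the first eigenfunctions on the connected set $B$ have constant sign, so $\phi_A>0$ a.e. in $B$ (or one appeals to $C^{1,\al}$ regularity plus the strong maximum principle). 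But $\phi_A\equiv 0$ on $B\setminus\overline A$, which has positive measure since $A\subsetneq B$ is a strict inclusion of open sets. This contradicts $\phi_A>0$ a.e.\ in $B$, unless $B\setminus\overline A$ has measure zero.

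To close the remaining gap I would argue directly from the equation rather than only from positivity of the eigenfunction, which is cleaner and handles the case $|B\setminus\overline A|>0$ without fuss: take any point $x_0$ in the open set $B\setminus\overline A$ and a small ball $B_r(x_0)\subset B\setminus\overline A$ on which $\phi_A\equiv 0$. Testing the eigenvalue equation on $B$ with $\psi$ supported in $B_r(x_0)$, the local term $\int|\na\phi_A|^{p-2}\na\phi_A\cdot\na\psi$ and the right-hand side both vanish, leaving
\[
\int_{\R^N}\int_{\R^N}|\phi_A(x)-\phi_A(y)|^{p-2}(\phi_A(x)-\phi_A(y))(\psi(x)-\psi(y))J(x-y)\,dx\,dy=0
\]
for all such $\psi$; since $\phi_A(x)=0$ for $x\in B_r(x_0)$ this reduces to $\int_{B_r(x_0)}\psi(x)\Big(\int_{\R^N}|\phi_A(y)|^{p-2}(-\phi_A(y))\,2J(x-y)\,dy\Big)dx=0$, forcing $\int_{\R^N}\phi_A(y)^{p-1}J(x-y)\,dy=0$ for a.e.\ $x\in B_r(x_0)$. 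As $J$ is continuous, nonnegative with $J(0)>0$, it is bounded below by a positive constant on some ball $B_\rho(0)$; hence $\phi_A\equiv 0$ on $B_\rho(x)$ for a.e.\ such $x$, i.e.\ $\phi_A$ vanishes on a neighbourhood strictly larger than where it was already known to vanish. Propagating this (a connectedness/covering argument using that $B$ is connected and $\{\phi_A=0\}$ is then open and closed in $B$) forces $\phi_A\equiv 0$ on all of $\R^N$, contradicting $\int_\Om|\phi_A|^p=1$. Therefore $\lambda_1(A)>\lambda_1(B)$.

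The main obstacle is precisely this last propagation step: one must be careful that the nonlocal term only "sees" within the support of $J$, so a single ball where $\phi_A=0$ only spreads the zero set by the radius of $\operatorname{supp}J$ at each iteration; connectedness of $B$ (hypothesized) is exactly what lets one chain finitely many such steps to cover $B$, and then the zero-extension outside $B$ together with compact support of $J$ finishes it. An alternative, if one prefers to avoid the iteration, is to invoke the strong maximum principle for $\mc L_{J,p}$ directly (the constant-sign statement of Theorem 1.1 combined with $C^{1,\al}$ regularity should give $\phi_A>0$ throughout the connected open set $B$), which immediately contradicts $\phi_A\equiv 0$ on the nonempty open set $B\setminus\overline A$; I would present the direct-equation argument as the primary proof and mention the maximum-principle shortcut as a remark.
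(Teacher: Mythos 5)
Your proposal is correct, and its first half coincides with the paper's proof: extend by zero to get $\la_1(A)\geq\la_1(B)$; if equality held, the extended first eigenfunction $\phi_A$ of $A$ would attain the infimum defining $\la_1(B)$ and hence be a first eigenfunction of $B$, which the paper immediately declares impossible ``since $B$ is connected and $\phi_A$ vanishes on $B\setminus A\neq\emptyset$'', i.e.\ it invokes the constant-sign/positivity property of first eigenfunctions on connected sets coming from Theorem \ref{fsthm4} and \cite{rossi1} --- exactly the shortcut you relegate to a final remark. Your primary argument is genuinely different and more self-contained: you test the eigenvalue equation on $B$ with $\psi$ supported in a ball of $B\setminus\overline{A}$ on which $\phi_A\equiv 0$, so the gradient term and the right-hand side vanish and the zero-order kernel, being continuous with $J(0)>0$, forces $\phi_A$ to vanish on an enlarged neighbourhood; iterating with a uniform radius and using that the set where $\phi_A$ vanishes locally is open and closed in the connected set $B$ yields $\phi_A\equiv 0$, contradicting the normalization. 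This buys independence from any strong maximum principle or strict-positivity statement (only the weak formulation and the stated properties of $J$ are used), at the cost of the propagation bookkeeping, whereas the paper's version is two lines but leans on the qualitative results of \cite{rossi1}. One caveat, which your proof shares with the paper's rather than introduces: strict inclusion of open sets does not by itself give $B\setminus\overline{A}\neq\emptyset$ (take $A=B$ minus a point), so your claim that $B\setminus\overline{A}$ has positive measure is not automatic; in such degenerate cases one may have $W^{1,p}_0(A)=W^{1,p}_0(B)$ and hence $\la_1(A)=\la_1(B)$, so both arguments implicitly require $B\setminus A$ to contain a ball (or at least a set of positive capacity), an imprecision in the Proposition's hypotheses rather than a defect specific to your proof.
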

\begin{proof}
	By definition of $\la_1(A)$,  $\la_1(A)\geq \la_1(B)$. Now, let if possible $\la_1(A)=\la_1(B)$ and let $\phi_A$ be normalized eigenfunction of $\la_1(A)$, it implies $\phi_A=0 $ on $\mathbb{R}^N\setminus A$. Therefore, 
	\begin{align*}
	 \int_{B}|\na u|^p ~dx &+ \int_{\mathbb{R}^N}\int_{\mathbb{R}^N}|\phi_A(x)-\phi_A(y)|^pJ(x-y)~dxdy\\
	& = \int_{A}|\na u|^p ~dx + \int_{\mathbb{R}^N}\int_{\mathbb{R}^N}|\phi_A(x)-\phi_A(y)|^pJ(x-y)~dxdy\\
	& = \la_1(A)\int_A |\phi_A|^p~dx\\
	& = \la_1(B)\int_B |\phi_A|^p~dx.
	\end{align*}
	This implies $\phi_A$ is an eigenfunction of $\la_B$. But this is impossible since $B$ is connected and $\phi_A$ vanishes on $B\setminus A \neq \emptyset$.\QED
\end{proof}

In \cite[Lemmas 3.5 and 3.6 ]{cfg} and  \cite[Lemma B.1]{second}  the following lemmas were proved:
\begin{Lemma}\label{fslem1} 
Let $\mc M= \{u\in W^{1,p}_0 (\Om) : \int_{\Om}|u|^p~dx =1\}$ then
 $\mc M$ is locally arcwise connected and any open connected subset $\mc S$ of $\mc M$ is arcwise connected. Moreover,
 If $\mc S^{'}$ is any connected component of an open set $\mc S\subset \mc M$, then $\partial \mc S^{\prime}\cap \mc S=\emptyset$.
\end{Lemma}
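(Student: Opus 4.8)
The statement is essentially point--set topology once one isolates the single analytic ingredient, namely that the radial retraction
\[
\pi\colon W^{1,p}_0(\Om)\setminus\{0\}\ \lra\ \mc M,\qquad \pi(v):=\frac{v}{\|v\|_{L^p(\Om)}},
\]
is well defined and continuous. The plan is to verify this first: since $\Om$ is bounded, $W^{1,p}_0(\Om)\hookrightarrow L^p(\Om)$ continuously, so $v\mapsto\|v\|_{L^p(\Om)}$ is continuous on $W^{1,p}_0(\Om)$ and strictly positive off the origin; hence $\pi$ is continuous, and $\pi|_{\mc M}=\mathrm{id}$.

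Next I would prove local arcwise connectedness of $\mc M$ by pushing straight segments forward through $\pi$. Given $u\in\mc M$ and an open neighbourhood $U$ of $u$ in $\mc M$, continuity of $\pi$ together with $\pi(u)=u\in U$ provides a radius $\de\in\bigl(0,\|u\|_{W^{1,p}_0(\Om)}\bigr)$ with $\pi\bigl(B_\de(u)\bigr)\subset U$, where $B_\de(u)$ is the open ball in $W^{1,p}_0(\Om)$; in particular $0\notin B_\de(u)$. Then $V:=B_\de(u)\cap\mc M\subset U$ is a neighbourhood of $u$ in $\mc M$, and for $v,w\in V$ the straight segment $t\mapsto(1-t)v+tw$, $t\in[0,1]$, stays inside $B_\de(u)$ by convexity of balls, so $t\mapsto\pi\bigl((1-t)v+tw\bigr)$ is a continuous arc in $U$ joining $\pi(v)=v$ to $\pi(w)=w$. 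Thus every point of $\mc M$ has arbitrarily small neighbourhoods whose points are pairwise joined by an arc inside a prescribed neighbourhood, i.e.\ $\mc M$ is locally arcwise connected.

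The remaining two claims are then purely topological. An open subset $\mc S$ of the locally arcwise connected space $\mc M$ is again locally arcwise connected, and a connected, locally arcwise connected space is arcwise connected: the relation ``joinable by an arc lying in $\mc S$'' has open equivalence classes by local arcwise connectedness, hence they are also closed, hence there is a single class by connectedness. For the last assertion, $\mc M$ is locally connected (being locally arcwise connected), so every connected component $\mc S'$ of the open set $\mc S$ is open in $\mc M$, while $\mc S'$ is always relatively closed in $\mc S$; therefore $\overline{\mc S'}\cap\mc S=\mc S'$ (closures taken in $\mc M$) and, $\mc S'$ being open, $\pa\mc S'=\overline{\mc S'}\setminus\mc S'$, whence $\pa\mc S'\cap\mc S=(\overline{\mc S'}\cap\mc S)\setminus\mc S'=\emptyset$.

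The hard part is really just the bookkeeping in the first two steps: keeping the ball radius $\de$ strictly below $\|u\|_{W^{1,p}_0(\Om)}$ so the segments used never reach the origin where $\pi$ is undefined, and tracking the subspace topology on $\mc M$ carefully. Beyond the continuity of $\pi$ there is no genuine obstacle, the argument being soft thereafter. (In the paper one may of course simply invoke \cite{cfg} and \cite{second}.)
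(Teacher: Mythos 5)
Your proof is correct, and it is essentially the standard argument: the paper itself gives no proof of this lemma (it simply quotes \cite[Lemmas 3.5 and 3.6]{cfg} and \cite[Lemma B.1]{second}), and your radial-projection construction $\pi(v)=v/\|v\|_{L^p}$ together with the soft topological deductions is precisely the argument used in those references. The only cosmetic remark is that your local arcwise connectedness is the ``weak'' form (points of $V$ are joined by arcs in $U$ rather than in $V$), but this is exactly what is needed to make path components of open subsets of $\mc M$ open, so the remaining two claims follow as you state.
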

\begin{Lemma}\label{fslem2} 
Let $\mc S=\{u\in \mc M : \tilde{I}(u)<r\}$, then any connected component of $\mc S$ contains a critical point of $\tilde{I}$.
\end{Lemma}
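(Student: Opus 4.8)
The plan is to deform $\mc S'$ along a negative pseudo-gradient flow for $\tilde I$ on the $C^1$-manifold $\mc M$ and to combine the Palais--Smale condition (Lemma \ref{fslem8}) with the topological input $\partial\mc S'\cap\mc S=\emptyset$ of Lemma \ref{fslem1}. Fix a nonempty connected component $\mc S'$ of $\mc S=\{u\in\mc M:\tilde I(u)<r\}$; since $\mc M$ is locally arcwise connected, $\mc S'$ is open. Argue by contradiction: suppose $\mc S'$ contains no critical point of $\tilde I$. As $S(u)=\int_\Om|u|^p$ is $C^1$ with $\langle S'(u),u\rangle=p\neq0$ on $\mc M$, the manifold $\mc M$ is complete and of class $C^1$, and on the open set of regular points of $\tilde I$ in $\mc M$ there is a locally Lipschitz pseudo-gradient field $V$ satisfying $\|V(u)\|\le2\|\tilde I'(u)\|_*$ and $\langle\tilde I'(u),V(u)\rangle\ge\|\tilde I'(u)\|_*^2$. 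For $u_0\in\mc S'$ let $\sigma$ solve $\dot\sigma=-V(\sigma)$, $\sigma(0)=u_0$, on its maximal interval $[0,T)$.

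Next I would use monotonicity to confine the flow to $\mc S'$ and to make it global. Since $\frac{d}{dt}\tilde I(\sigma(t))\le-\|\tilde I'(\sigma(t))\|_*^2\le0$, we have $\tilde I(\sigma(t))\le\tilde I(u_0)<r$, so $\sigma([0,T))\subset\mc S$; being connected and meeting $\mc S'$, and $\mc S$ being locally connected, $\sigma([0,T))$ lies in the component $\mc S'$, so it never meets a critical point and $V$ is well defined along it. Integrating, $\int_0^T\|\tilde I'(\sigma)\|_*^2\,dt\le\tilde I(u_0)-\la_1(\Om)<\infty$, using $\tilde I\ge\la_1(\Om)$ on $\mc M$; combined with $\|\dot\sigma\|\le2\|\tilde I'(\sigma)\|_*$ and Cauchy--Schwarz, $\sigma$ has finite length on bounded time intervals, so if $T<\infty$ then $\sigma(T^-)$ exists in $\mc M$ with $\tilde I(\sigma(T^-))<r$, hence $\sigma(T^-)\in\overline{\mc S'}\cap\mc S=\mc S'$ by Lemma \ref{fslem1}, contradicting maximality of $T$; thus $T=+\infty$.

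Now $\int_0^\infty\|\tilde I'(\sigma(t))\|_*^2\,dt<\infty$ forces a sequence $t_n\to\infty$ with $\|\tilde I'(\sigma(t_n))\|_*\to0$ while $\tilde I(\sigma(t_n))$ stays bounded; the Palais--Smale condition on $\mc M$ gives a subsequence with $\sigma(t_n)\to u^*\in\mc M$ and $\tilde I'(u^*)=0$. Since $\sigma(t_n)\in\mc S'$, $u^*\in\overline{\mc S'}$; since $\tilde I(\sigma(\cdot))$ is non-increasing, $\tilde I(u^*)=\lim_n\tilde I(\sigma(t_n))\le\tilde I(u_0)<r$, so $u^*\in\mc S$. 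By Lemma \ref{fslem1}, $\overline{\mc S'}\cap\mc S=(\mc S'\cup\partial\mc S')\cap\mc S=\mc S'$ because $\partial\mc S'\cap\mc S=\emptyset$; hence $u^*\in\mc S'$ is a critical point of $\tilde I$ in $\mc S'$, contradicting our hypothesis. This proves that every connected component of $\mc S$ contains a critical point of $\tilde I$.

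The step I expect to be the crux is the last one: ensuring the asymptotic critical point $u^*$ of the flow lands inside $\mc S'$ and not on its boundary. This is exactly where the topological fact $\partial\mc S'\cap\mc S=\emptyset$ from Lemma \ref{fslem1} enters, together with the strict inequality $\tilde I(u^*)<r$ guaranteed by monotonicity of $\tilde I$ along the flow. The other ingredients --- construction of a locally Lipschitz pseudo-gradient field, global existence of the flow (using $\tilde I\ge\la_1(\Om)$), and extraction of a Palais--Smale sequence --- are routine given $\tilde I\in C^1$, completeness of $\mc M$, and Lemma \ref{fslem8}.
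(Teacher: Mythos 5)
Your argument is correct in its essentials, but it is worth noting that the paper does not prove this lemma at all: it is quoted from \cite[Lemma 3.6]{cfg} and \cite[Lemma B.1]{second}, where the proof runs through Ekeland's variational principle rather than a flow. There one minimizes $\tilde I$ over (the closure of) the component $\mc S'$, observes that $\tilde I$ is bounded below by $\la_1(\Om)$, extracts from Ekeland an almost-critical minimizing sequence, applies the Palais--Smale condition to get a critical limit point $u^*$ with $\tilde I(u^*)<r$, and then uses exactly the topological fact $\partial\mc S'\cap\mc S=\emptyset$ of Lemma \ref{fslem1} to place $u^*$ inside $\mc S'$. Your negative pseudo-gradient flow proof uses the same three ingredients (lower bound $\tilde I\ge\la_1(\Om)$ on $\mc M$, the Palais--Smale condition of Lemma \ref{fslem8}, and the boundary disjointness), and the way you confine the trajectory to $\mc S'$ by monotonicity of $\tilde I$ and handle finite-time blow-up via the finite-length estimate is sound; the criticality of the limit $u^*$ also needs the small extra observation (implicit in the proof of Lemma \ref{fslem8}) that the Lagrange multipliers along the sequence converge, so that $\|\tilde I'(\cdot)\|_*\to0$ passes to the limit. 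The trade-off is that your route requires the construction of a locally Lipschitz pseudo-gradient field and well-posedness of the ODE on the $C^1$ Finsler manifold $\mc M\subset W^{1,p}_0(\Om)$ --- standard (Palais) but not free, especially since $W^{1,p}_0$ is not Hilbert --- whereas the Ekeland route of the cited references avoids any flow; in exchange your deformation argument needs no minimization and produces the critical point directly inside the component.
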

\begin{Lemma}\label{fslem3} 
Let $1 \leq p \leq \infty$ and $U,V \in \mb R$ such that $U\cdot V \leq 0$. Define the following function
\[g(t)=|U -tV|^p+|U-V|^{p-2}(U-V)V|t|^p,\; t \in \mb R.\]
Then we have \[g(t)\leq g(1)=|U-V|^{p-2}(U-V)U,\; t \in \mb R.\]
\end{Lemma}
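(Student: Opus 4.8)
The plan is to establish the pointwise bound by elementary calculus, after two normalizing reductions. First I would note that $g$ is invariant under the substitution $(U,V)\mapsto(-U,-V)$, so without loss of generality $U\ge 0$; the hypothesis $UV\le 0$ then forces $V\le 0$, and I set $V=-s$ with $s\ge 0$. The degenerate cases are immediate: if $s=0$ then $g(t)\equiv|U|^{p}=|U|^{p-2}U\cdot U=g(1)$, and if $U=0$ then $g(t)=|V|^{p}|t|^{p}-|V|^{p}|t|^{p}=0=g(1)$. So I may assume $U>0$ and $s>0$. A direct computation then gives
\[
g(1)=(U+s)^{p}-s\,(U+s)^{p-1}=U\,(U+s)^{p-1}=|U-V|^{p-2}(U-V)U,
\]
which in particular proves the claimed identity, and for arbitrary $t$,
\[
g(t)=|U+ts|^{p}-s\,(U+s)^{p-1}|t|^{p}.
\]

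Next I would split according to the sign of $t$. For $t\ge 0$, put $h(t)=(U+ts)^{p}-s\,(U+s)^{p-1}t^{p}$. If $p>1$ this is differentiable on $(0,\infty)$ with
\[
h'(t)=ps\Big[(U+ts)^{p-1}-\big((U+s)\,t\big)^{p-1}\Big];
\]
since $r\mapsto r^{p-1}$ is nondecreasing on $[0,\infty)$ and $(U+ts)-(U+s)t=U(1-t)$, the sign of $h'(t)$ coincides with that of $1-t$, so $h$ increases on $[0,1]$ and decreases on $[1,\infty)$. Hence $g(t)=h(t)\le h(1)=g(1)$ for every $t\ge 0$ (when $p=1$ one has $h(t)\equiv U=g(1)$, so the same conclusion holds trivially). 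For $t<0$, write $t=-\tau$ with $\tau>0$; the triangle inequality gives $|U-\tau s|\le U+\tau s$, whence
\[
g(-\tau)=|U-\tau s|^{p}-s\,(U+s)^{p-1}\tau^{p}\le (U+\tau s)^{p}-s\,(U+s)^{p-1}\tau^{p}=h(\tau)\le h(1)=g(1).
\]
This proves $g(t)\le g(1)$ for all $t\in\mb R$; the endpoint case $p=\infty$ follows by letting $p\to\infty$ in the inequalities for finite exponents.

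The genuine content of the lemma is the monotonicity of $h$ on $[0,\infty)$ with a single maximum at $t=1$, and I do not expect this to be difficult once the problem is reduced to nonnegative $t$. The points that need care are the degenerate cases $s=0$, $U=0$ and $p=1$ --- where $g$ is constant and the usual convention $|0|^{p-2}0=0$ is in force --- and the passage from $t<0$ to $t>0$ via the triangle inequality, which is exactly where the hypothesis $UV\le 0$ is used; without it the inequality is false, as one sees already for $U=V>0$.
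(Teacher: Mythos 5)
Your proof is correct. Note that the paper itself does not prove Lemma \ref{fslem3}: it is quoted from the literature (it is Lemma B.1 of Brasco--Parini \cite{second}, with the local antecedent in Cuesta--de Figueiredo--Gossez \cite{cfg}), and the argument given there is essentially the one you reconstruct --- reduce by the symmetry $(U,V)\mapsto(-U,-V)$ to $U\ge 0\ge V$, dispose of the degenerate cases, and show by a one-variable derivative/monotonicity analysis that the maximum over $t\ge 0$ is attained at $t=1$, the case $t<0$ being reduced to $t>0$; so your proposal is a faithful self-contained substitute for the cited proof. Two small points of hygiene: the exponent range in the statement should be read as $1<p<\infty$ (for $p=\infty$ the quantity $|U-tV|^p$ is not defined, so ``letting $p\to\infty$'' has no actual content --- better simply to exclude that case, which is all the paper ever uses), and the reduction ``WLOG $U\ge 0$, hence $V\le 0$'' should formally come after discarding $U=0$, since for $U=0$ the hypothesis $UV\le 0$ does not constrain the sign of $V$; as you observe, that case is trivial anyway, so neither remark affects the validity of the argument.
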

\begin{Lemma}\label{fslem4}
Let $\al\in (0,1)$ and $p>1$. For any non-negative functions $u$, $v \in W^{1,p}_0 (\Om)$, consider the function $\sigma_t (x):= \left[(1-t)v^p(x)+ tu^p(x)\right]^{\frac{1}{p}}$ for all $t\in[0, 1]$. Then for all $t \in [0,1]$, 
\begin{equation*}
\begin{aligned}
  \int_{\mathbb{R}^N}\int_{\mathbb{R}^N} |\sigma_t (x)-\sigma_t (y)|^p J(x-y) ~dxdy  & \leq (1-t) \int_{\mathbb{R}^N}\int_{\mathbb{R}^N} |v (x)-v (y)|^p J(x-y) ~dxdy \\& \quad + t\int_{\mathbb{R}^N}\int_{\mathbb{R}^N} |u (x)-u (y)|^p J(x-y) ~dxdy.
 \end{aligned}
\end{equation*}

\end{Lemma}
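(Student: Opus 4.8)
The final statement I am asked to prove is Lemma~\ref{fslem4}, the convexity-type estimate for the nonlocal $p$-energy along the curve $\sigma_t = \left[(1-t)v^p + t u^p\right]^{1/p}$. The plan is to reduce the double-integral inequality to a pointwise inequality on the integrand, exploiting the fact that $J \geq 0$ so it suffices to show, for every fixed pair $x,y$,
\begin{align*}
|\sigma_t(x)-\sigma_t(y)|^p \leq (1-t)\,|v(x)-v(y)|^p + t\,|u(x)-u(y)|^p .
\end{align*}
Writing $a_1 = v(x)$, $a_2 = v(y)$, $b_1 = u(x)$, $b_2 = u(y)$, all nonnegative, this becomes the scalar claim
\begin{align*}
\Big|\big((1-t)a_1^p + t b_1^p\big)^{1/p} - \big((1-t)a_2^p + t b_2^p\big)^{1/p}\Big|^p \leq (1-t)\,|a_1-a_2|^p + t\,|b_1-b_2|^p .
\end{align*}

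The core of the argument is this scalar inequality, which is a known ``hidden convexity'' fact (it is the statement that $(\xi,\eta)\mapsto |\xi^{1/p}-\eta^{1/p}|^p$ is jointly convex on $[0,\infty)^2$ when one first substitutes $\xi = a^p$, or equivalently that the map $a \mapsto a^p$ interacts with the $p$-th power of differences through Minkowski-type convexity). First I would establish it: set $F(a,b) = |a-b|^p$ for $a,b \geq 0$ and observe that the desired inequality is precisely $F\big((1-t)\odot(a_1,a_2) \oplus t\odot(b_1,b_2)\big) \leq (1-t)F(a_1,a_2) + tF(b_1,b_2)$ where $\odot,\oplus$ denote the operations $a \mapsto ((1-t)a^p)^{1/p}$ combined coordinatewise — i.e.\ convexity of $F$ after the change of variables $s = a^p$. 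Concretely, with $G(s,r) := |s^{1/p} - r^{1/p}|^p$ for $s,r \geq 0$, the claim is that $G$ is convex on $[0,\infty)^2$; then applying convexity at the two points $(a_1^p, a_2^p)$ and $(b_1^p, b_2^p)$ with weights $1-t$ and $t$ gives exactly what we want. To prove $G$ is convex one can either compute the Hessian of $G$ in the interior (it is homogeneous of degree one, so it reduces to checking along the slice $s+r=1$, a one-variable computation) and check nonnegativity, or argue via the Minkowski inequality in $\ell^p$ of the two vectors $(s^{1/p}, 0)$ and $(0, r^{1/p})$ and the triangle inequality. For $p \geq 1$ the Minkowski route is cleanest; since the lemma is stated for $1 \leq p \leq \infty$, both endpoints need a separate (trivial) check, with $p=\infty$ reading $|\max(a_1,b_1)\text{-type expression}|$ handled directly.

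Once the pointwise scalar inequality is in hand, the proof finishes immediately: substitute $a_1 = v(x)$, $a_2 = v(y)$, $b_1 = u(x)$, $b_2 = u(y)$ for a.e.\ $(x,y)$, multiply by $J(x-y) \geq 0$, and integrate over $\mathbb{R}^N \times \mathbb{R}^N$; the right-hand side splits linearly into the two stated terms, and all the integrals are finite because $u,v \in W^{1,p}_0(\Om)$ and $J$ has compact support (so the nonlocal energies are controlled, as already used throughout Section~2). I expect the main obstacle to be the verification of the scalar convexity inequality itself — it is elementary but not entirely transparent, and one must be careful with the case $1 < p < 2$ where the second derivative of $s \mapsto s^{1/p}$ changes behavior; the slickest path is to avoid calculus altogether and derive it from Minkowski's inequality applied to $\|( (1-t)^{1/p}a_1, t^{1/p}b_1) - ((1-t)^{1/p}a_2, t^{1/p}b_2)\|_{\ell^p} \leq \|((1-t)^{1/p}a_1, t^{1/p}b_1)\|$ type manipulations, together with the elementary bound $|(\sum c_i)^{1/p}|$ versus $\sum |c_i|^{1/p}$ for nonnegative $c_i$, which is where the nonnegativity hypothesis $u,v \geq 0$ is essential.
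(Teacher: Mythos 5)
Your proposal is correct and follows essentially the same route as the paper, which simply defers to \cite[Lemma 4.1]{pal}: there, too, the estimate is reduced to the pointwise bound $|\sigma_t(x)-\sigma_t(y)|^p\le(1-t)|v(x)-v(y)|^p+t|u(x)-u(y)|^p$ for nonnegative values, obtained from the reverse Minkowski inequality in the two-point weighted $\ell^p$ space with weights $(1-t,t)$ — equivalently the convexity of $(s,r)\mapsto|s^{1/p}-r^{1/p}|^p$ — after which one multiplies by $J(x-y)\ge 0$ and integrates, the argument being independent of the particular kernel. Only a minor slip: the lemma is stated for $p>1$ (the hypothesis $\al\in(0,1)$ is vestigial here), not $1\le p\le\infty$, but your Minkowski argument covers every $p\ge 1$ anyway.
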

\begin{proof}
Proof is analogous to \cite[Lemma 4.1]{pal}. \QED
\end{proof}
\section{Proof of Theorem \ref{fsthm1}}

\begin{Lemma}\label{fslem5}
Let $1<p<\infty$. Then number $\la_*$ (defined  in \eqref{fs7}) is the second smallest eigenvalue  of $\mathcal{L}_{J,p}$.
\end{Lemma}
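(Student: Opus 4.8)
The strategy is to show two things: first, that $\la_*$ is an eigenvalue (it already is a critical value of $\tilde I$ by Proposition \ref{fsprop1}, so any associated critical point gives an eigenfunction with eigenvalue $\la_*$); and second, that there is \emph{no} eigenvalue of $\mathcal L_{J,p}$ strictly between $\la_1(\Om)$ and $\la_*$. Since we already know $\la_*>\la_1(\Om)$, establishing the gap will force $\la_*$ to be the second eigenvalue, and Theorem \ref{fsthm1} follows by setting $\la_2(\Om):=\la_*$. The first part is essentially immediate from the Palais–Smale verification in Lemma \ref{fslem8} plus Proposition \ref{fsprop1}, so the real content is the second part.

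To prove the gap, I would argue by contradiction: suppose $\mu$ is an eigenvalue with $\la_1(\Om)<\mu<\la_*$, with eigenfunction $v\in\mc M$. The key point is that, as remarked after Theorem \ref{fsthm4} (via the discrete Picone inequality of \cite{hardy}), eigenfunctions associated to any eigenvalue other than $\la_1(\Om)$ must change sign, so both $v^+:=\max\{v,0\}$ and $v^-:=\max\{-v,0\}$ are nontrivial. Now consider the open sublevel set $\mc S=\{u\in\mc M:\tilde I(u)<\mu+\delta\}$ for small $\delta>0$; testing the eigenvalue equation against $v$ gives $\tilde I(v)=\mu$, so $v\in\mc S$. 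I would like to produce a continuous path in $\mc S$ joining $v$ to a point near $\phi_1$ and another joining $v$ to a point near $-\phi_1$, concatenate them into an admissible $\ga\in\Gamma$ lying in $\mc S$, and conclude $\la_*\le\sup_\ga\tilde I<\mu+\delta<\la_*$ for $\delta$ small, a contradiction. The construction of such paths is exactly where Lemmas \ref{fslem3} and \ref{fslem4} (and Lemma \ref{fslem2}) enter.

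Concretely, the path from $v$ toward $\phi_1$ should be built by deforming $v$ to a positive function without increasing the energy level past $\mu$. The natural device is the interpolation $\sigma_t=\big[(1-t)(v^-)^p+t(v^+)^p\big]^{1/p}$ of Lemma \ref{fslem4}: for the nonlocal term one has the convexity estimate of Lemma \ref{fslem4}, and for the gradient term one uses the analogous (pointwise) convexity; combined with Lemma \ref{fslem3} to control the cross terms, one shows $\tilde I(\sigma_t/\|\sigma_t\|_{L^p})$ stays below $\max\{\tilde I(v^+/\|v^+\|),\tilde I(v^-/\|v^-\|)\}$, and each of $v^{\pm}$ normalized has energy $\le$ something tied to $\mu$. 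Then, since $v^+/\|v^+\|_{L^p}$ is a nonnegative function, it lies in a connected component of a sublevel set below $\la_*$, and by Lemma \ref{fslem2} that component contains a critical point — but the only critical points at that level are $\pm\phi_1$, and by sign $v^+$ connects to $+\phi_1$. Symmetrically $v^-$ connects to $-\phi_1$. Splicing: $-\phi_1 \rightsquigarrow v^-$ (rescaled interpolation) $\rightsquigarrow v$ (through $\sigma_t$, reversed) $\rightsquigarrow v^+\rightsquigarrow \phi_1$ gives $\ga\in\Gamma$ with $\max_\ga\tilde I<\la_*$, the contradiction. The main obstacle, and the step needing the most care, is verifying that \emph{every} function along these interpolating paths stays strictly below the level $\la_*$ — i.e.\ controlling both the local Dirichlet term and the nonlocal term simultaneously under the non-scale-invariant functional, which is precisely the "combined effects" difficulty flagged in the introduction; Lemmas \ref{fslem3} and \ref{fslem4} are tailored to handle exactly this bookkeeping.
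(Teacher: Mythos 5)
Your plan follows the same overall scheme as the paper (argue by contradiction, split an eigenfunction $v$ at level $\mu$ into $v^{\pm}$, use Lemmas \ref{fslem3} and \ref{fslem4} to build low-energy interpolations, then use sublevel-set connectivity and Lemma \ref{fslem2} to reach $\pm\phi_1$), but the connectivity half of the argument has genuine gaps. First, you take an \emph{arbitrary} eigenvalue $\mu\in(\la_1(\Om),\la_*)$ and later assert that ``the only critical points at that level are $\pm\phi_1$''; this is not available unless you first reduce to the case that $\tilde I$ has no critical value in $(\la_1(\Om),\mu)$, which is how the paper begins (using that $\la_1(\Om)$ is isolated, and that the Palais--Smale condition makes the set of critical values closed, one may take $\mu$ to be the smallest eigenvalue above $\la_1(\Om)$). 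Second, your sublevel set $\{u\in\mc M:\tilde I(u)<\mu+\de\}$ is too large: since your interpolations only give levels $\le\mu$, the component you land in contains the critical point $v$ itself, so Lemma \ref{fslem2} may simply return $v$ and yields no link to $\pm\phi_1$ and no contradiction. The missing idea is a \emph{strict descent below the level $\mu$}: the paper observes that $v^-/\|v^-\|_{L^p}$ is \emph{not} a critical point of $\tilde I$ (it is sign-definite and vanishes on a set of positive measure, hence cannot be an eigenfunction), so there is a $C^1$ path along which $\tilde I$ strictly decreases to a point $w$ with $\tilde I(w)<\mu$; only then does Lemma \ref{fslem2}, applied to the component of $\{\tilde I<\mu\}$ containing $w$, force that component to contain $\phi_1$ or $-\phi_1$, because all critical values below $\mu$ equal $\la_1(\Om)$ and $\la_1(\Om)$ is simple.

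There are two further defects in the splicing. Your identification ``by sign $v^+$ connects to $+\phi_1$ and $v^-$ to $-\phi_1$'' is not a proof: both $v^+/\|v^+\|_{L^p}$ and $v^-/\|v^-\|_{L^p}$ are nonnegative after normalization, so sign alone cannot decide which of $\pm\phi_1$ lies in the relevant component. The paper does not decide: it connects to whichever eigenfunction Lemma \ref{fslem2} produces, and then exploits the evenness of $\tilde I$ (the reflected path joins $-v^-/\|v^-\|_{L^p}$ to the opposite eigenfunction) together with the extra Lemma \ref{fslem3} path $\ga_3(t)=\big((1-t)v^+-v^-\big)/\|(1-t)v^+-v^-\|_{L^p}$, which runs from $v$ to $-v^-/\|v^-\|_{L^p}$, to reach both endpoints $\pm\phi_1$. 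Finally, your segment ``$v^-\rightsquigarrow v$ through $\sigma_t$ reversed'' is incorrect as stated: $\sigma_t$ of Lemma \ref{fslem4} joins $v^-$ to $v^+$ through nonnegative functions and never passes through the sign-changing $v$; passing between the two sides is precisely the role of the paths $\ga_1,\ga_3$ built from Lemma \ref{fslem3}. These points are repairable, but as written the argument does not close.
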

\begin{proof} On the contrary assume  that there exists an eigenvalue $s$ such that  $\la_{1}(\Om)<s <\la_*$. It implies that $s$ is a critical value of  $\tilde{I}$ . Since $\la_1(\Om)$ is isolated, we may assume that $\tilde{I}$ has no critical value in
$(\la_{1}(\Om),s)$. To get a contradiction, it is enough to construct a path $\ga$ connecting from $\phi_{1}$ to $-\phi_{1}$ such that $\tilde{I}(\ga)\leq s $.

\noi Let $u\in \mc M$ be a critical point of $\tilde{I}$ at level
$s$. Then $u$ satisfies,
\begin{equation}\label{fs8}
\begin{aligned}
\mathcal{H}_{J,p}(u,\phi)=\la_*\int_{\Om}|u|^{p-2}u \phi ~dx
 \text{ for all  } \phi \in {W}^{1,p}_0(\Om).
\end{aligned}
\end{equation}
\noi Since, $u$  changes sign in $\Om$ . Taking $\phi= u^{+}$ and $\phi= u^-$ in \eqref{fs8}, we get 
{\small\begin{equation}\label{fs9} \int_{\Om}|\na u^+ |^{p} ~dx+ \int_{\mathbb{R}^N}\int_{\mathbb{R}^N} |u(x)-u(y)|^{p-2}(u(x)-u(y))(u^+(x)-u^+(y)) J(x-y)~ dxdy = \la_* \int_{\Om}(u^{+})^p dx,
\end{equation}}
and 
{\small\begin{equation}\label{fs10}
\int_{\Om}|\na u^- |^{p} ~dx-  \int_{\mathbb{R}^N}\int_{\mathbb{R}^N} |u(x)-u(y)|^{p-2}(u(x)-u(y))(u^-(x)-u^-(y))J(x-y)~ dxdy = \la_*\int_{\Om} (u^{-})^p dx.
\end{equation}}
So as  a consequence, we have
\begin{align}\label{fs11}
& \int_{\Om}|\na u^+ |^{p} ~dx+ \int_{\mathbb{R}^N}\int_{\mathbb{R}^N} |u^+(x)-u^{+}(y)|^pJ(x-y)~ dxdy \leq \la_* \int_{\Om}|u^+|^p~dx,\\ \label{fs12}
& \int_{\Om}|\na u^- |^{p} ~dx+ \int_{\mathbb{R}^N}\int_{\mathbb{R}^N} |u^-(x)-u^{-}(y)|^pJ(x-y)~ dxdy \leq \la_* \int_{\Om}|u^-|^p~dx.
\end{align}
It further implies that 
\[\tilde{I}(u)=\la_*,\; \tilde{I}\left(\frac{ u^+}{ \|u^+\|_{L^p}}\right)
\leq \la_*,\tilde{I}\left(\frac{u^-}{\|u^-\|_{L^p}}\right)\leq \la_*,\tilde{I}\left( \frac{-u^-}{\|u^-\|_{L^p}}\right)\leq \la_* .\] Now, we will define three paths in $\mc M$ which go $u$ to  $\frac{ u^+}{ \|u^+\|_{L^p}}$ , $\frac{ u^+}{ \|u^+\|_{L^p}}$ to $\frac{u^-}{\|u^-\|_{L^p}}$ and $\frac{u^-}{\|u^-\|_{L^p}}$ to $\frac{-u^-}{\|u^-\|_{L^p}}:$
\begin{align*}
& \ga_{1}(t)=\frac{u^+- (1-t)u^-}{ \|u^+- (1-t)u^-\|_{L^p}}, \;  \ga_2(t)=\frac{[(1-t)(u^{+})^p+ t(u^{-})^p]^{1/p}}{\|(1-t)(u^{+})^p+ t(u^{-})^p\|_{L^p}} ,\;  \ga_3(t)=\frac{(1-t)u^{+}-u^{-}}{ \|(1-t)u^{+}-u^{-}\|_{L^p}}.
\end{align*}
Taking into account \eqref{fs9}, \eqref{fs10} and Lemma \ref{fslem3} with $U=u^+(x)-u^+(y)$ and $V=u^-(x)-u^-(y)$, we deduce that for all $t\in[0,1]$,
\begin{align*}
\tilde{I}(\ga_1(t)) &\leq \frac{ \ds \int_{\Om}|\na u^+|^p~dx + \displaystyle\int_{\mathbb{R}^N}\int_{\mathbb{R}^N} |U-V|^{p-2}(U-V)U J(x-y)~dxdy }{ \|u^+- (1-t)u^-\|_{L^p}^p}\\
&\quad +\frac{|1-t|^p\left[\ds \int_{\Om}|\na u^-|^p~dx -\displaystyle\int_{\mathbb{R}^N}\int_{\mathbb{R}^N}|U-V|^{p-2}(U-V)V J(x-y)~dxdy \right]}{\|u^+- (1-t)u^-\|_{L^p}^p}\\
&=\la_*.
\end{align*}
By means of Lemma \ref{fslem4},  we deduce
\begin{equation*}
\begin{split}
\tilde{I}(\ga_2(t)) &\leq \frac{(1-t)\left[\ds \int_{\Om}|\na u^+|^p~dx +\displaystyle\int_{\mathbb{R}^N}\int_{\mathbb{R}^N}|u^+(x)-u^+(y)|^p J(x-y)~dxdy \right] }{ \|(1-t)(u^{+})^p+ t(u^{-})^p\|^p_{L^p}}\\
& \quad +\frac{t \left[ \ds \int_{\Om}|\na u^-|^p~dx+\displaystyle\int_{\mathbb{R}^N}\int_{\mathbb{R}^N}|u^-(x)-u^-(y)|^p J(x-y)~dxdy \right]}{ \|(1-t)(u^{+})^p+ t(u^{-})^p\|^p_{L^p}}\\
& \leq \la_*.
\end{split}
\end{equation*}
Once again from \eqref{fs9}, \eqref{fs10} and Lemma \ref{fslem3}  with  $U=u^-(y)-u^-(x)$ and $V=u^+(y)-u^+(x)$, we obtain
\begin{align*}
\tilde{I}(\ga_3(t)) &\leq \frac{  \ds \int_{\Om}|\na u^-|^p~dx+ \displaystyle\int_{\mathbb{R}^N}\int_{\mathbb{R}^N} |U-V|^{p-2}(U-V)U J(x-y) ~dxdy  }{  \|(1-t)u^{+}-u^{-}\|_{L^p}^p}\\&\quad + \frac{ |1-t|^p \left[\ds \int_{\Om}|\na u^+|^p~dx  - \displaystyle \int_{\mathbb{R}^N}\int_{\mathbb{R}^N} |U-V|^{p-2}(U-V)V J(x-y)~dxdy \right]}{\|(1-t)u^{+}-u^{-}\|_{L^p}^p}\\
&=\la_*.
\end{align*}
 Clearly $\pm \phi_{1}\in \mc S$, where  $\mc S = \{v\in\mc M : \tilde{I}(v)<\la_* \}$.  Also, $\frac{ u^-}{\|u^-\|_{L^p}} $ is not a critical point of $\tilde{I}$, thanks to the fact that  $\frac{ u^-}{\|u^-\|_{L^p}} $ does not change sign and vanishes on a
set of positive measure. Therefore, there exists a $C^1$ path $\sigma : [-\de,\de]\ra
\mc M$ with $\sigma(0)= \frac{u^-}{\|u^-\|_{L^p}}$ and
$\frac{d}{dt}\tilde{I}(\sigma(t))|_{t=0}\ne 0$. With the help of  this path we
can move from $\frac{ u^-}{\|u^-\|_{L^p}}$ to a point $v$ with
$\tilde{I}(v)<\la_*$. Consider a  connected component of $\mc S$
containing $v$ and employing Lemma \ref{fslem2} we get 
$\phi_{1}$ (or $-\phi_{1}) $ is in this component. Let us assume that it is
$\phi_{1}$. At this point  we construct a path $\ga_{4}(t)$ from
$\frac{ u^-}{\|u^-\|_{L^p}}$ to $\phi_{1}$ which is at level
less than $\la_*$. Consider the symmetric path $-\ga_{4}(t)$ connects
$\frac{- u^-}{\|u^-\|_{L^p}}$ to $-\phi_{1}$. Since $\tilde{I}$ is even,
\[\tilde{I}(-\ga_4(t))= \tilde{I}(\ga_4(t))\leq \la_* \;\mbox{for all}\; t .\]
Lastly, we can connect  $\ga_1(t)$, $\ga_2(t)$ and $\ga_4(t)$, to obtain a path from $u$
to $\phi_{1}$ and joining $\ga_3(t)$ and $-\ga_4(t)$ we get a path from
$u$ to $-\phi_{1}$. Taking account all this together, we get a path  in $\mc M$ 
from $\phi_{1}$ to $-\phi_{1}$  at levels  $\leq \la_*$
for all $t$. This completes the proof.\QED
\end{proof}
\textbf{Proof of Theorem \ref{fsthm1} :}  By Theorem 3.3 of \cite{rossi1}, there exists a positive number  $\la_2(\Om)$ given by 
\begin{align*}
\la_2(\Om) = \inf_{A\in \mathcal{A}}\sup_{ u\in A} \mathcal{H}_{J,p}(u,u), 
\end{align*}
where $\mathcal{A}= \{ A \subset \mathcal{M}: A \text{ compact, symmetric, of genus } \geq 2  \}$.    Let $\ga$ be a curve in $\La$ then by joining this with its symmetric path $-\ga$ we obtain a set of genus $\geq 2$ where $\tilde{I}$ does not increase its value. Hence, $ \la_2(\Om)\leq \la_*$ (defined  in \eqref{fs7}). From Lemma \ref{fslem5},  $\la_*$ is the smallest eigenvalue. That is, there is no eigenvalue between $\la_1(\Om)$ and $\la_*$, it implies $\la_*\leq \la_2(\Om)$. Therefore, $\la_2(\Om)$ is second eigenvalue of  the operator $\mathcal{L}_{J,p}$ with variational characterization 
\begin{align*}
\la_{2}(\Om) := & \inf_{\ga \in \Gamma}\sup_{u\in \ga}\left(\int_{\Om}|\na u|^p~dx \int_{\mathbb{R}^N}\int_{\mathbb{R}^N} |u(x)-u(y)|^p J(x-y)~dxdy \right),
\end{align*}
 where $\Gamma =\{\ga \in C([-1,1], \mathcal{M}): \ga(-1)=-\phi_{1} \;\mbox{and}\; \ga(1)=\phi_1\}$.



\section{ Proof of Theorems \ref{fsthm2} and \ref{fsthm3}}
In this Section we will give a sharp lower bound on $\la_1(\Om)$ and $\la_{2}(\Om)$ in terms of volume of $\Om$. We will assume that $p\geq 2$ and  $J$ is radially symmetric decreasing nonnegative continuous function with compact support, $J(0)>0$ and $\int_{\mathbb{R}^N} J(x)~ dx =1$. With this assumption, $J^*(x)= J(x)$, where $J^*$ stands for the  symmetric decreasing rearrangement of the function $J$. Also, we have the following  Polya-Szego  inequality:  
	\begin{align}\label{fs13}
	 \int_{\mathbb{R}^N}\int_{\mathbb{R}^N} |u^*(x)-u^*(y)|^{p}J(x-y)~ dxdy \leq  \int_{\mathbb{R}^N}\int_{\mathbb{R}^N} |u(x)-u(y)|^{p}J(x-y)~ dxdy. 
	\end{align}
	For the proof of \eqref{fs13}, we refer \cite[Corrollary 2.3]{leib1}. 	\\
	
\textbf{Proof of Theorem \ref{fsthm2} :}
	Let $\Om$  be a bounded open set of volume  $c$  and $\Om^* = B$ the ball of same volume. Let $\phi_1$ be the eigenfunction corresponding to $\la_1(\Om)$ and $\phi_1^*$ be the Schwarz symmetrization of the function $\phi_1$ then by  Polya-Szego  inequality (See \cite[Theorem 2.1.3]{henrot} and \cite[Corrollary 2.3]{leib1}), we have 
\begin{equation}
	\begin{aligned}\label{fs20}
& \int_{\Om^*}|\na \phi_1^*|^p~dx + \int_{\mathbb{R}^N}\int_{\mathbb{R}^N} |\phi_1^*(x)-\phi_1^*(y)|^{p}J(x-y)~ dxdy \\
&  \quad  \leq   \int_{\Om}|\na \phi_1|^p~dx +\int_{\mathbb{R}^N}\int_{\mathbb{R}^N} |\phi_1(x)-\phi_1(y)|^{p}J(x-y)~ dxdy.
	\end{aligned}
	\end{equation}
	Moreover, we know that $\ds  \int_{\Om^*}| \phi_1^*|^p~dx=  \int_{\Om}| \phi_1|^p~dx$. Therefore by definition of $\la_1(\Om)$, we obtain
	\begin{align*}
\la_1(\Om^*) & \leq  \frac{ \ds \int_{\Om^*}|\na \phi_1^*|^p~dx + \int_{\mathbb{R}^N}\int_{\mathbb{R}^N} |\phi_1^*(x)-\phi_1^*(y)|^{p}J(x-y)~ dxdy}{\|\phi_1^*\|^p_{L^p}}\\
& \leq \frac{ \ds \int_{\Om}|\na \phi_1|^p~dx + \int_{\mathbb{R}^N}\int_{\mathbb{R}^N} |\phi_1(x)-\phi_1(y)|^{p}J(x-y)~ dxdy}{\|\phi_1\|^p_{L^p}} = \la_1(\Om).
	\end{align*}
	 Furthermore, if $\la_1(\Om)= \la_1(B)$ then equality must hold in \eqref{fs20}.  Then using \cite[Lemma A.2]{frank}, we have that $\phi$ is a translation of a radially symmetric decreasing 	function. It implies that $\Om$ is a ball. 	It yields the required result. \QED

\begin{Lemma}\label{fslem6}
	Let $1<p<\infty$ and $a,\; b \in \mathbb{R}$ then the following holds:
\begin{enumerate}
	\item[(i)] There exists $c_p>0$ such that 
	\begin{align*}
	|a-b|^p\leq |a|^p+|b|^p+c_p(|a|^2+|b|^2)^{\frac{p-2}{2}}|ab|
	\end{align*}
	\item [(ii)] 	If $ab\leq 0$ then 
	\begin{equation*}
|a-b|^{p-2}(a-b)a \geq \;  \left\{
	\begin{array}{ll}
	|a|^p-(p-1)|a-b|^{p-2}ba  &  \text{ if }1<p<2, \\
	|a|^p-(p-1)|a|^{p-2}ba & \text{ if } p>2. \\
	\end{array} 
	\right. 
	\end{equation*}
\end{enumerate}
\end{Lemma}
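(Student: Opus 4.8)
The plan is to treat the two parts separately; both reduce, via the homogeneity of degree $p$ of the two sides, to elementary one-variable inequalities. For (i) the reduced statement is a boundedness claim that follows by a compactness argument, while for (ii) the reduced statement is exactly Bernoulli's inequality.

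For (i), I would first dispose of the case $ab\geq 0$: there $|a-b|\leq\max\{|a|,|b|\}\leq(|a|^p+|b|^p)^{1/p}$, so $|a-b|^p\leq|a|^p+|b|^p$ and the asserted bound holds for any positive $c_p$ since the remaining term is nonnegative. When $ab<0$ both $a,b$ are nonzero; dividing the target inequality by $|a|^p$ and setting $s:=-b/a>0$ turns it into $(1+s)^p\leq 1+s^p+c_p(1+s^2)^{(p-2)/2}s$. Hence it suffices to show that
\[
\psi(s):=\frac{(1+s)^p-1-s^p}{(1+s^2)^{(p-2)/2}\,s}
\]
is bounded above on $(0,\infty)$, and then take $c_p:=\sup_{s>0}\psi(s)$, which is positive since $\psi(1)=(2^p-2)/2^{(p-2)/2}>0$. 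Now $\psi$ is continuous on $(0,\infty)$ (the denominator never vanishes there), and a short Taylor expansion gives $\psi(s)\to p$ as $s\to 0^+$; since $\psi(1/s)=\psi(s)$, the same limit holds as $s\to\infty$. Thus $\psi$ extends continuously to the compact set $[0,\infty]$, hence is bounded. The only delicate point is the verification of the limits, but these are routine once one keeps leading-order terms.

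For (ii), I would use the symmetry $(a,b)\mapsto(-a,-b)$, which leaves both sides of each inequality unchanged, to reduce to $a\geq 0\geq b$; the subcase $a=0$ is trivial since both sides vanish. For $a>0$ write $b=-t$ with $t\geq 0$; note $|a-b|=a+t$, which is where $ab\leq 0$ enters. A direct rearrangement shows that the claim for $1<p<2$, i.e. $|a-b|^{p-2}(a-b)a\geq|a|^p-(p-1)|a-b|^{p-2}ba$, is equivalent to $(a+t)^{p-2}\big(a^2+(2-p)at\big)\geq a^p$; dividing by $a^p$ and putting $\tau:=t/a\geq 0$ this becomes $(1+\tau)^{2-p}\leq 1+(2-p)\tau$, which is Bernoulli's inequality for the exponent $2-p\in(0,1)$. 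Similarly, for $p>2$ the claim $|a-b|^{p-2}(a-b)a\geq|a|^p-(p-1)|a|^{p-2}ba$ rearranges to $(a+t)^{p-1}a-(p-1)a^{p-1}t\geq a^p$, i.e. $(1+\tau)^{p-1}\geq 1+(p-1)\tau$, which is Bernoulli's inequality for the exponent $p-1>1$. So both parts of (ii) follow at once from Bernoulli; the main thing to watch is the sign of the term $(p-2)ab$ in the rearrangement (it is nonnegative precisely when $1<p<2$ and $ab\leq0$, giving the two directions of Bernoulli), and keeping track of $|a-b|=a+t$ throughout.
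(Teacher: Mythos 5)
Your argument is correct, and it is worth noting that the paper gives no actual proof of this lemma: it simply defers to \cite[Lemmas B.2 and B.3]{second}. Your write-up is therefore a genuinely self-contained alternative. For (i), after disposing of $ab\geq 0$, the reduction by $p$-homogeneity to the one-variable function $\psi(s)=\big((1+s)^p-1-s^p\big)/\big((1+s^2)^{(p-2)/2}s\big)$, the symmetry $\psi(1/s)=\psi(s)$, and the limits $\psi(s)\to p$ as $s\to 0^+$ (hence also as $s\to\infty$) do give boundedness on $(0,\infty)$, so $c_p:=\sup\psi$ works; this is all the lemma asserts, since only the existence of some $c_p>0$ is used later in the proof of Theorem \ref{fsthm3}. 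The only cosmetic caveat is the degenerate point $a=b=0$ when $1<p<2$, where $(|a|^2+|b|^2)^{(p-2)/2}$ is singular and the inequality must be read with the usual convention that the product with $|ab|=0$ vanishes --- but this ambiguity is already present in the statement itself and is harmless. For (ii), the normalization to $a\geq 0\geq b$ via $(a,b)\mapsto(-a,-b)$, the identity $|a-b|=a+t$ with $b=-t$, and the two rearrangements are all correct, and the two cases reduce exactly to the two directions of Bernoulli's inequality, $(1+\tau)^{r}\leq 1+r\tau$ for $0<r<1$ and $(1+\tau)^{r}\geq 1+r\tau$ for $r>1$, i.e.\ to the concavity/convexity tangent-line inequalities for $\tau\mapsto(1+\tau)^r$. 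Compared with citing the reference, your route buys elementarity and self-containedness at the modest cost of a non-explicit constant $c_p$, which is irrelevant for the application.
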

\begin{proof}
For detailed proof,  see 	\cite[Lemmas B.2 and B.3]{second}.\QED
\end{proof}
\begin{Lemma}\label{fslem7}
	(Nodal domains) Let $\la>\la_1(\Om)$ be an eigenvalue of $\mathcal{L}_{J,p}$ and $\phi_\la$ be the associated eigenfunction. Assume the set 
	\begin{align*}
	\Om^+:= \{ x \in \Om : \phi_\la(x)>0 \} \quad \text{and} \quad 
		\Om^-:= \{ x \in \Om : \phi_\la(x)<0 \} .
\end{align*}
Then $\la> \{ \la_1(\Om^+),\; \la_1(\Om^-) \}$. 
\end{Lemma}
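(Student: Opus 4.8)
The plan is to test the weak formulation of $(P_\la)$ against the positive and negative parts $\phi_\la^{\pm}$ of the eigenfunction and to compare, term by term, with the Rayleigh quotients defining $\la_1(\Om^{\pm})$. Since $\la>\la_1(\Om)$, the eigenfunction $\phi_\la$ changes sign (as recalled after Theorem \ref{fsthm4}), so $\Om^+$ and $\Om^-$ are nonempty open sets of positive measure; and by the $C^{1,\al}(\ov\Om)$ regularity of $\phi_\la$ (Theorem \ref{fsthm4}) the functions $\phi_\la^{\pm}$, extended by $0$ outside, belong to $W^{1,p}_0(\Om^{\pm})$ (approximate by $(\phi_\la\mp\e)^{\pm}$, which are compactly supported in $\Om^{\pm}$), hence are admissible competitors in the variational characterization of $\la_1(\Om^{\pm})$.

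First I would take $\phi=\phi_\la^+$ in $\mathcal{H}_{J,p}(\phi_\la,\phi)=\la\int_\Om|\phi_\la|^{p-2}\phi_\la\,\phi\,dx$. The local contribution equals exactly $\int_\Om|\na\phi_\la^+|^p\,dx$, while for the nonlocal contribution the key is the pointwise inequality
\[
|u(x)-u(y)|^{p-2}(u(x)-u(y))\big(u^+(x)-u^+(y)\big)\ \ge\ |u^+(x)-u^+(y)|^p,\qquad u:=\phi_\la,
\]
valid for all $x,y\in\R^N$. This follows from Lemma \ref{fslem6}(ii) with $a=u^+(x)-u^+(y)$ and $b=u^-(x)-u^-(y)$, so that $a-b=u(x)-u(y)$: running through the cases of the decomposition $\Om=\Om^+\cup\Om^-\cup\{\phi_\la=0\}$ one checks that $ab\le0$ always, whence $-(p-1)|a|^{p-2}ba\ge0$ (resp.\ $-(p-1)|a-b|^{p-2}ba\ge0$ when $1<p<2$), which is precisely the claimed bound. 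Multiplying by $J(x-y)$, integrating, and using the definition of $\la_1(\Om^+)$ applied to $\phi_\la^+/\|\phi_\la^+\|_{L^p}$, one gets
\[
\la\int_\Om(\phi_\la^+)^p\,dx\ \ge\ \int_\Om|\na\phi_\la^+|^p\,dx+\int_{\R^N}\int_{\R^N}|\phi_\la^+(x)-\phi_\la^+(y)|^pJ(x-y)\,dxdy\ \ge\ \la_1(\Om^+)\int_\Om(\phi_\la^+)^p\,dx,
\]
so $\la\ge\la_1(\Om^+)$; the same argument with $\phi_\la^-$ gives $\la\ge\la_1(\Om^-)$.

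The main obstacle is to upgrade both inequalities to strict ones. I would argue by contradiction: if $\la=\la_1(\Om^+)$ then every inequality above is an equality, so the pointwise inequality must be an equality for $J(x-y)\,dxdy$-almost every $(x,y)$. But when $x$ and $y$ lie in opposite nodal domains one has $ab<0$ and $a\ne0$, hence the estimate of Lemma \ref{fslem6}(ii) is \emph{strict} there; consequently $J(x-y)=0$ for a.e.\ $(x,y)\in(\Om^+\times\Om^-)\cup(\Om^-\times\Om^+)$. This is impossible: since $\phi_\la$ is continuous and sign-changing, $\ov{\Om^+}$ and $\ov{\Om^-}$ meet, and near a common point, by $J(0)>0$ and the continuity of $J$, there is a set of positive measure of pairs $(x,y)\in\Om^+\times\Om^-$ with $J(x-y)>0$. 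This contradiction yields $\la>\la_1(\Om^+)$, and symmetrically $\la>\la_1(\Om^-)$. The delicate point in this last step is exactly the claim that the two nodal domains are \emph{adjacent}, so that the compactly supported kernel $J$ still couples them; it is this nonlocal coupling, absent for the purely local $p$-Laplacian, that forces the strictness, and this is where the ``$J(0)>0$ and $J$ continuous'' hypotheses are used.
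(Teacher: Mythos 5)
Your overall strategy coincides with the paper's: test the weak formulation with $\phi_\la^{\pm}$, apply Lemma \ref{fslem6}(ii) with $a=\phi_\la^{+}(x)-\phi_\la^{+}(y)$ and $b=\phi_\la^{-}(x)-\phi_\la^{-}(y)$ (noting $ab\le 0$), and compare the resulting estimate with the Rayleigh quotient defining $\la_1(\Om^{\pm})$. Up to the non-strict conclusion $\la\ge\la_1(\Om^{\pm})$ your argument is correct, and you are in fact more careful than the paper about the admissibility of $\phi_\la^{\pm}$ and about where strictness has to come from: the paper simply writes a strict inequality immediately after invoking the lemma, without comment.

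The gap is in your upgrade to strictness. The claim that ``since $\phi_\la$ is continuous and sign-changing, $\ov{\Om^{+}}$ and $\ov{\Om^{-}}$ meet'' is not justified and is false in general. The lemma does not assume $\Om$ connected (and in its application to Theorem \ref{fsthm3} the relevant sets $\Om_n$ are disjoint unions of balls), so the two nodal sets may lie in different connected components at distance larger than the diameter of $\mathrm{supp}\, J$; even for connected $\Om$, continuity and sign change alone do not exclude a zero set with nonempty interior separating $\Om^{+}$ from $\Om^{-}$ by more than the range of $J$. In such a configuration $J(x-y)=0$ on $\Om^{+}\times\Om^{-}$ and your contradiction disappears; worse, equality can genuinely occur: take $\Om=B_1\cup B_2\cup B_3$ with $B_2,B_3$ congruent balls, $B_1$ a strictly larger ball, all components separated by more than $\mathrm{supp}\, J$, and let $\phi_\la$ be $0$ on $B_1$, the first eigenfunction on $B_2$ and minus the first eigenfunction on $B_3$. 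Since the energy and the equation decouple completely beyond the range of $J$, this is an eigenfunction with $\la=\la_1(B_2)>\la_1(B_1)=\la_1(\Om)$ (by Proposition \ref{fsprop2}), it changes sign, and yet $\la_1(\Om^{+})=\la_1(B_2)=\la$. So strictness cannot be obtained by the adjacency argument you propose; it requires an extra hypothesis (connectedness of $\Om$ together with some control of the zero set, or a kernel positive on all of $\mathbb{R}^N$ as in the fractional case of \cite{second}). Note that your analysis actually exposes the delicate point that the paper's own one-line ``$<$'' glosses over, since here, unlike in \cite{second}, the kernel $J$ is compactly supported; but the proposed fix does not close it, and the strict inequality is exactly where the proof (yours and the paper's) is incomplete as stated.
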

\begin{proof}
	By \cite[Corrollary 3.1]{rossi1}, we have $\phi_\la \in C^{1,\al}(\overline{\Om})$ for some $\al \in (0,1)$. Therefore, $\Om^+$ and $\Om^-$ are open subsets of $\Om$ and hence $\la_1(\Om^+) $ and $\la_1(\Om^-)$ are well defined. Also, from  \cite[Lemma 3.3]{rossi1} $\phi_\la$ changes sign in $\Om$.  Since $\phi_\la$ is an eigenfunction, it implies 
	\begin{align}\label{fs14}
	\mathcal{H}_{J,p}(\phi_\la,v)= \la \int_{\Om}|\phi_\la|^{p-2}\phi_\la v ~ dx,\; \; \text{for all } v \in W^{1,p}_0(\Om). 
	\end{align}
		Let $v= \phi_\la^+$. Using Lemma \ref{fslem2}(ii) with  $ a= \phi_\la^+(x)-\phi_\la^+(y)$ and $b= \phi_\la^-(x)-\phi_\la^-(y)$  then we have 
	\begin{align*}
	& \int_{\Om^+}|\na \phi_\la^+|^p~dx + \int_{\mathbb{R}^N}\int_{\mathbb{R}^N} |\phi_\la^+(x)-\phi_\la^+(y)|^{p}J(x-y)~ dxdy \\
	&   <  \int_{\Om^+}|\na \phi_\la^+|^p~dx +\int_{\mathbb{R}^N}\int_{\mathbb{R}^N} |\phi_\la(x)-\phi_\la(y)|^{p-2}(\phi_\la(x)-\phi_\la(y))(\phi_\la^+(x)-\phi_\la^+(y))J(x-y)~ dxdy\\
	& = \la \int_{\Om^+}|\phi_\la^+|^{p}~dx .
	\end{align*}
	Taking in to account that $\phi_\la^+$ is admissible in variational framework defined for $\la_1(\Om^+)$. Indeed, 
	\begin{align*}
	\la_1(\Om^+) \int_{\Om^+}|\phi_\la^+|^{p}~dx\leq \int_{\Om^+}|\na \phi_\la^+|^p~dx + \int_{\mathbb{R}^N}\int_{\mathbb{R}^N} |\phi_\la^+(x)-\phi_\la^+(y)|^{p}J(x-y)~ dxdy.
	\end{align*}
	Therefore, $\la> \la_1(\Om^+)$. Now for the set $\Om^-$, we will proceed analogously as above with $v= \phi_\la^-, \; a=\phi_\la^-(x)-\phi_\la^-(y) $ and $b=  \phi_\la^+(x)-\phi_\la^+(y)$ to achieve $\la> \la_1(\Om^-)$. Hence we get the desired result. \QED
	\end{proof}
\textbf{Proof of Theorem \ref{fsthm2} :}
	Let $\phi_2$ be the eigenfunction corresponding to  the eigenvalue $\la_2(\Om)$, let 
	\begin{align*}
	\Om^+:= \{ x \in \Om : \phi_2(x)>0 \} \quad \text{and} \quad 
	\Om^-:= \{ x \in \Om : \phi_2(x)<0 \} .
	\end{align*}
	It implies $|\Om^+|+ |\Om^-|\leq |\Om|$ and using Lemma \ref{fslem7} and Theorem \ref{fsthm2}, we have 
	\begin{align*}
	\la_2(\Om)> \la_1(\Om^+)\geq \la_1(B_{r_1}) \quad \text{ and } \quad \la_2(\Om)> \la_1(\Om^+)\geq \la_1(B_{r_2}),
	\end{align*}
	where $B_{r_1}$ and $B_{r_2}$ are two balls such that $|B_{r_1}| =|\Om^+|$ and $|B_{r_2}| =|\Om^-|$. Hence 
	\begin{align*}
	\la_2(\Om)> \max \{  \la_1(B_{r_1}),\;  \la_1(B_{r_2}) \} \quad \text{and} \quad  |B_{r_1}|+ |B_{r_2}|\leq |\Om|.
	\end{align*}
	\textbf{Claim: } $\max \{  \la_1(B_{r_1}),\;  \la_1(B_{r_2}) \}$ is minimized when $|B_{r_1}|= |B_{r_2}|= |\Om|/2$.\\
Let $B_r$ be a ball such that $|B_r|=|\Om|/2 $.	Since $|B_{r_1}|+ |B_{r_2}|\leq |\Om|$ therefore we will divide the proof of claim in three cases.\\
	\textbf{Case 1: } If $|B_{r_1}|, |B_{r_2}|\leq  |\Om|/2$.\\
	It implies that balls $B_{r_1},\;B_{r_2}$ are contained in ball $B_r$ then by Proposition \ref{fsprop2} we have 
	$\la_1(B_r)\leq \la_1(B_{r_1}),\;  \la_1(B_{r_2})$. It implies $\max \{  \la_1(B_{r_1}),\;  \la_1(B_{r_2}) \} \geq  \la(B_r)$. 
	\\
	\textbf{Case 2: } If $  |B_{r_1}|< |\Om|/2< |B_{r_2}| $.\\
It implies $|B_{r_1}|< |B_r|< |B_{r_2}|$. From Proposition \ref{fsprop2}, we have $\la_1(B_{r_1})\leq \la_1(B_r)\leq   \la_1(B_{r_2})$. Thus,  $\max \{  \la_1(B_{r_1}),\;  \la_1(B_{r_2}) \}\geq \la_1(B_{r_2})\geq  \la_1(B_r)$.\\
\textbf{Case 3: } If $  |B_{r_2}|< |\Om|/2< |B_{r_1}| $.\\
Similarly as in case 2 we have  $\max \{  \la_1(B_{r_1}),\;  \la_1(B_{r_2}) \}\geq  \la_1(B_r)$.\\
  Hence,  from all cases we have $\max \{  \la_1(B_{r_1}),\;  \la_1(B_{r_2}) \}$ is minimized only when $|B_{r_1}|= |B_{r_2}|= |\Om|/2$. It proves \eqref{fs15}.\\
   Now for equality we define $\Om_n:= B_r(s_n)\cup B_r(t_n)$, where  
  $\{s_n\}$ and $\{t_n\}$ are  sequences in $\mathbb{R}^N$ such that $|s_n-t_n|$ diverges as $n \ra \infty$. Let $\phi_{s_n}$ and $\phi_{t_n}$ are the positive normalized eigenfunctions on $B_R(s_n)$ and $B_R(t_n)$ respectively.  Let $ f : \mathbb{S}^1 \ra \mathcal{M}$ given by 
  \begin{align*}
  f(\theta_1, \theta_2)= \frac{|\theta_1|^{\frac{2-p}{p}}\theta_1\phi_{s_n}- |\theta_2|^{\frac{2-p}{p}}\theta_2\phi_{t_n}}{ \bigg \| |\theta_1|^{\frac{2-p}{p}}\theta_1\phi_{s_n}- |\theta_2|^{\frac{2-p}{p}}\theta_2\phi_{t_n}\bigg \|_{L^p}}
  \end{align*}
  Then define $A= \text{Range}(f)$. It implies that $A$ is compact, symmetric, and of genus $\geq 2$. Now  taking in account the definition of $\la_2(\Om)$ and Lemma \ref{fslem6}(ii) with 	 $a= \phi_{s_n}(x)-\phi_{s_n}(y)$ and $b= \phi_{t_n}(x)-\phi_{t_n}(y)$, we obtain 
  \begin{align*}
 \la_2(\Om_n)& \leq \max_{|\theta_1|^p+|\theta_2|^p=1} \bigg \{ \int_{\Om_n}|\na (\theta_1\phi_{s_n}-\theta_2\phi_{t_n})|^p~dx+ \int_{\mathbb{R}^N}\int_{\mathbb{R}^N}|\theta_1a- \theta_2b|^pJ(x-y)~dxdy\bigg\}\\
 & = \max_{|\theta_1|^p+|\theta_2|^p=1} \bigg \{ \int_{\Om_n}|\na \theta_1\phi_{s_n}|^p~dx+ \int_{\Om_n}|\na \theta_2\phi_{t_n}|^p~dx\\
 & \hspace{4 cm} + \int_{\mathbb{R}^N}\int_{\mathbb{R}^N}|\theta_1a- \theta_2b|^pJ(x-y)~dxdy\bigg\}\\
 & \leq  \max_{|\theta_1|^p+|\theta_2|^p=1} \bigg \{ \int_{\Om_n}|\na \theta_1\phi_{s_n}|^p~dx+ \int_{\Om_n}|\na \theta_2\phi_{t_n}|^p ~dx\\
 &  \hspace{2.5cm}+ \int_{\mathbb{R}^N}\int_{\mathbb{R}^N}|\theta_1a|^pJ(x-y)~dxdy + \int_{\mathbb{R}^N}\int_{\mathbb{R}^N}| \theta_2b|^pJ(x-y)~dxdy\\
 & \hspace{3cm} + c_p \int_{\mathbb{R}^N}\int_{\mathbb{R}^N}(|\theta_1a|^2+ |\theta_2b|^2)^{\frac{p-2}{2}} |\theta_1\theta_2ab|J(x-y) ~dxdy
 \bigg\}\\
 & = \la_1(B_R) + c_p\max_{|\theta_1|^p+|\theta_2|^p=1} \int_{\mathbb{R}^N}\int_{\mathbb{R}^N}(|\theta_1a|^2+ |\theta_2b|^2)^{\frac{p-2}{2}} |\theta_1\theta_2ab|J(x-y) ~dxdy. 
  \end{align*}
	Since $ab= -\phi_{s_n}(x)\phi_{t_n}(y)- \phi_{s_n}(y)\phi_{t_n}(x)$ is nonzero only when $(x,y)\in B_R(s_n)\times B_R(t_n) \cup  B_R(t_n)\times B_R(s_n)$. And $s_n-t_n-2R<x-y$ for all  $(x,y)\in B_R(s_n)\times B_R(t_n) \cup  B_R(t_n)\times B_R(s_n)$. Hence 
	\begin{align*}
	 \la_2(\Om_n)& \leq \la_1(B_R)\\
	 &  +  2J(s_n-t_n-2R)c_p\max_{|\theta_1|^p+|\theta_2|^p=1} \int_{B_R(s_n)}\int_{B_R(t_n)}(|\theta_1a|^2+ |\theta_2b|^2)^{\frac{p-2}{2}} |\theta_1\theta_2ab| ~dxdy.
	\end{align*}
	Since   \begin{align*}
	 2c_p\max_{|\theta_1|^p+|\theta_2|^p=1} \int_{B_R(s_n)}\int_{B_R(t_n)}(|\theta_1a|^2+ |\theta_2b|^2)^{\frac{p-2}{2}} |\theta_1\theta_2ab| ~dxdy< \infty
	\end{align*} and $J(s_n-t_n-2R) \ra 0 $ as $n \ra \infty$. Thus $ \ds \lim_{n\ra \infty}\la_2(\Om_n) \leq \la_1(B_R)$. This proved the desired result.  \QED

  \section{Remarks on the eigenvalues of combination of $p$-Laplacian and fractional $p$-Laplacian}
We  consider the  following eigenvalue problem:   
 \begin{equation*}
 (F_\la)\;
 \left.\begin{array}{rllll}
 \mathcal{L}(u) =\la |u|^{p-2}u \text{ in } \Om,\; u=0 \text{ in } \mathbb{R}^N\setminus \Om,
 \end{array}
 \right.
 \end{equation*}
 where  $1< p< \infty$  and 
 the operator $\mathcal{L}(u)$ is defined as 
 $\mathcal{L}(u):= -\De_p u +(-\De)^s_p u $ where $\De_p u$ is the usual $p$-Laplacian operator and  $(-\De)^s_p u$ is the fractional  $p$-Laplacian is given by 
 \begin{align*}
(-\De)^s_p u(x):= 2 \text{ P.V } \ds  \int_{\mathbb{R}^N} \frac{|u(x)-u(y)|^{p-2}(u(x)-u(y))}{|x-y|^{N+ps}}~ dy,
\end{align*} 
 where   $\Om \subset\mathbb{R}^N (N>ps)$ be a bounded open set, $0<s<1$. 
\\
 \begin{Definition}
 	A function $u \in W^{1,p}_0(\Om)$ is a solution of $(F_\la)$ if $u$ satisfies the  equation 
 	\begin{align*}
 	\mathcal{H}(u,\phi)= \la \int_{\Om}|u|^{p-2}u \phi~ dx,\; \; \text{for all } \phi \in W^{1,p}_0(\Om),
 	\end{align*}
 	\noi where
 	\begin{align*}
 	\mathcal{H}(u,\phi):=&  \int_{\Om}|\na u|^{p-2}\na u \cdot \na \phi~ dx\\ & \quad + \int_{\mathbb{R}^N} \int_{\mathbb{R}^N}  
 	\frac{|u(x)-u(y)|^{p-2}(u(x)-u(y))(\phi(x)-\phi(y))}{|x-y|^{N+ps}}~ dx dy
 	\end{align*}
 \end{Definition}
 The energy functional associated with problem $(F_\la)$ is the functional  $\mathcal{I}: W^{1,p}_0 (\Om) \ra \mb R $ given by
 \begin{align*}
 \mathcal{I}(u)=  \int_{\Om}|\na u|^{p} ~ dx +\int_{\mathbb{R}^N} \int_{\mathbb{R}^N}  
 \frac{|u(x)-u(y)|^{p}}{|x-y|^{N+ps}}~ dx dy-\la \int_{\Om}| u|^{p} ~ dx .
 \end{align*}
 Let $u \in C_c^{\infty}(\Om)$ then by extending $u=0$ on $\mathbb{R}^N\setminus \Om$, we see that 
 \begin{align*}
\int_{\mathbb{R}^N} \int_{\mathbb{R}^N}  
\frac{|u(x)-u(y)|^{p}}{|x-y|^{N+ps}}~ dx dy= \int_{Q}  
\frac{|u(x)-u(y)|^{p}}{|x-y|^{N+ps}}~ dx dy, \; \text{ where }Q= \mathbb{R}^{2N}\setminus (\Om^c\times \Om^c). 
 \end{align*} 
 Also, it is not difficult to show that  \[\int_{Q}  
 \frac{|u(x)-u(y)|^{p}}{|x-y|^{N+ps}}~ dx dy\le C \|\na u\|_{L^p}^{p}\;  \text{for all}\; u \in C_c^{\infty}(\Om).\] By density, we get $\mathcal{I}$ is well defined on $W^{1,p}_{0}(\Om).$ Also,  $\mathcal{I}\in C^{1}( W^{1,p}_0 (\Om),\mb R)$. Moreover, $\tilde{ \mathcal{I}}:=  \mathcal{I}|_{\mc M}$ is $C^1(W^{1,p}_0 (\Om),\mb R)$, where $\mc M$ is defined as in \eqref{fs6}. 
 By using the same assertions and arguments as in the proofs of Theorem \ref{fsthm4} and Theorem \ref{fsthm1} we can obtain Theorems \ref{fsthm4} and \ref{fsthm1} for the operator $\mathcal{L}$.

\end{document}